\newenvironment{proof}[1][Proof:]{\begin{trivlist} 
\item[\hskip \labelsep {\bfseries #1}]}{\end{trivlist}} 
\newcommand{\qed}{\nobreak \ifvmode \relax \else \ifdim\lastskip<1.5em \hskip-\lastskip \hskip1.5em plus0em minus0.5em \fi \nobreak \vrule height0.75em width0.5em depth0.25em\fi}
\def\R{{\bf R}}
\def\T{{\rm T}}
\newtheorem{algorithm}{Algorithm}[section]
\newtheorem{example}{Example}[section]
\newtheorem{theorem}{Theorem}[section]
\newtheorem{lemma}{Lemma}[section]
\newtheorem{remark}{Remark}[section]
\newcommand{\diag}{\mathop{\mathrm{diag}}}
\begin{document}
\title{Arc-Search Infeasible Interior-Point Algorithm for Linear Programming}
\author{Yaguang Yang\footnote{\normalsize NRC, Office of Research, 21 Church Street, 
Rockville, 20850. Email: yaguang.yang@verizon.net.} \\
}
\date{\today}

\maketitle    

\begin{abstract}
Mehrotra's algorithm has been the most successful infeasible interior-point algorithm for linear 
programming since 1990. Most popular interior-point software packages for linear programming are based
on Mehrotra's algorithm. This paper proposes an alternative algorithm, arc-search
infeasible interior-point algorithm. We will demonstrate, by testing Netlib problems and comparing
the test results obtained by arc-search infeasible interior-point algorithm and Mehrotra's algorithm,
that the proposed arc-search infeasible interior-point algorithm is a more efficient algorithm 
than Mehrotra's algorithm.

\end{abstract}

{\bf Keywords:} Arc-search, infeasible  interior-point algorithm, linear programming.

\newpage
 
\section{Introduction}

Interior-point method is now regarded as a mature technique of linear programming \cite[page 2]{wright97},
following many important developments in 1980-1990, such as, a proposal of path-following method
\cite{megiddo89}, the establishment of polynomial bounds for 
path-following algorithms \cite{kmy89a,kmy89b}, the development of Mehrotra's predictor-corrector (MPC) 
algorithm \cite{Mehrotra92} and independent implementation and verification \cite{lms91,lms92},
and the proof of the polynomiality of infeasible interior-point algorithm \cite{mizuno94,zhang94}. 
Although many more algorithms have been proposed since then (see, for example,
\cite{mty93,miao96,roos06,spt07,kah13}), there is no significant improvement 
in the best polynomial bound for interior-point algorithms, and there is no 
report of a better algorithm than MPC for general linear programming problems\footnote{There are 
some noticeable progress focused on problems with special structures, for example \cite{wta14}.}. 
In fact, the most popular interior-point method software packages
implemented MPC, for example, LOQO \cite{loqo}, PCx \cite{cmww97} and LIPSOL \cite{zhang96}.

However, there were some interesting results obtained in recent years. For example, higher-order 
algorithms that used second or higher-order derivatives were demonstrated to improve the computational 
efficiency \cite{Mehrotra92,lms92}. Higher-order algorithms, however, had either a poorer polynomial bound 
than first-order algorithms \cite{mar90} or did not even have a polynomial bound \cite{Mehrotra92,cartis09}. 
This dilemma was partially solved in \cite{yang13} which proved that higher-order algorithms can achieve the
best polynomial bound. An arc-search interior-point algorithm for linear programming was devised
in \cite{yang13}. The algorithm utilized the first and second-order 
derivatives to construct an ellipse to approximate the central path. Intuitively, searching along this 
ellipse should generate a larger step size than searching along any straight line. Indeed, it was shown in 
\cite{yang13} that the arc-search algorithm has the best polynomial bound and it may be very efficient 
in practical computation. This result was extended to prove a similar result for convex quadratic programming
and the numerical test result was very promising \cite{yang11}. 

The algorithms proposed in \cite{yang13,yang11} assume that the starting point is feasible and the 
central path does exist. Available Netlib test problems are limited because most Netlib problems may 
not even have an interior-point as noted in \cite{cg06}. To better demonstrate the claims in the previous 
papers, we propose an infeasible arc-search interior-point algorithm in this paper, which allows us to 
test a lot more Netlib problems. The proposed algorithm keeps a nice feature developed in \cite{yang13,yang11}, 
i.e., it searches optimizer along an arc (part of an ellipse). It also adopts some strategies used 
in MPC, such as using different step sizes for the vector of primal variables and the vector of slack 
variables. We will show that the proposed arc-search infeasible interior-point algorithm is very competitive in 
computation by testing all Netlib problems in standard form and comparing the results to those obtained 
by MPC. To have a fair comparison, both algorithms are implemented in MATLAB; for all test problems, 
the two Matlab codes use the same pre-processor, start from the same initial point, use the same parameters, 
and terminate with the same stopping criterion. Since the main cost in computation for both algorithms 
is to solve linear systems of equations which are exactly the same for 
both algorithms, and the arc-search infeasible interior-point algorithm uses 
less iterations in most tested problems than MPC, we believe that the proposed
algorithm is more attractive than the MPC algorithm.

The remaining of the paper is organized as follows. Section 2 briefly 
describes the problem. Section 3 presents the proposed algorithm and some simple but important properties. 
Section 4 discusses implementation details for both algorithms. Section 5 provides numerical results and compares the results obtained by both arc-search method
and Mehrotra's method. Conclusions are summarized in Section 6.

\section{Problem Descriptions}

Consider the Linear Programming in the standard form:
\begin{eqnarray}
\min \hspace{0.05in} c^{\T}x, \hspace{0.15in} \mbox{\rm subject to} 
\hspace{0.1in}  Ax=b, \hspace{0.1in} x \ge 0,
\label{LP}
\end{eqnarray}
where $A \in {\bf R}^{m \times n}$, $b \in {\bf R}^{m} $, $c \in {\bf R}^{n}$ 
are given, and $x \in {\bf R}^n$  is the vector to be optimized. Associated 
with the linear programming is the dual programming that is also presented in the standard form:
\begin{eqnarray}
\max \hspace{0.05in} b^{\T}\lambda, \hspace{0.15in} \mbox{\rm subject to} 
\hspace{0.1in}  A^{\T}\lambda+s=c, \hspace{0.1in} s \ge 0,
\label{DP}
\end{eqnarray}
where dual variable vector $\lambda \in {\bf R}^{m}$, and dual slack vector 
$s \in {\bf R}^{n}$. 

Throughout the paper, we will denote the residuals of the equality constraints 
(the deviation from the feasibility) by
\begin{equation}
r_b=Ax-b, \hspace{0.1in} r_c=A^{\T}\lambda + s-c,
\label{residuals}
\end{equation}
the duality measure by 
\begin{equation}
\mu=\frac{x^{\T}s}{n},
\label{duality}
\end{equation}
the $i$th component of $x$ by $x_i$, the Euclidean norm of $x$ by $\| x \|$, the 
identity matrix of any dimension by $I$, the vector of all ones with appropriate 
dimension by $e$, the Hadamard (element-wise) product of two vectors $x$ and $s$ 
by $x \circ s$. To make the notation simple for block column vectors, we will denote, 
for example, a point in the primal-dual problem $[x^{\T}, \lambda^{\T}, s^{\T}]^{\T}$ 
by $(x,\lambda, s)$. We will denote a {\it vector} initial point of any algorithm by $(x^0,\lambda^0, s^0)$,
the corresponding {\it scalar} duality measure by $\mu_0$, the point after the $k$th iteration by 
$(x^k,\lambda^k, s^k)$, the corresponding duality measure by $\mu_k$, the optimizer by 
$(x^*, \lambda^*, s^*)$, the corresponding duality measure by $\mu_*$. 
For $x \in \R^n$, we will denote a related diagonal matrix 
by $X \in \R^{n \times n}$ whose diagonal elements are components
of the vector $x$. 

The central path ${\cal C}(t)$ of the primal-dual linear programming problem
is parameterized by a scalar $t \ge 0$ as follows. For each interior 
point $(x, \lambda, s) \in {\cal C}(t)$ on the central path, there 
is a $t \ge 0$ such that 
\begin{subequations}
\begin{align}
Ax=b  \label{patha} \\
A^{\T}\lambda+s=c \label{pathb} \\
(x,s) \ge 0 \label{pathd}  \\
x_is_i = t, \hspace{0.1in} i=1,\ldots,n \label{pathc}.
\end{align}
\label{centralpath}
\end{subequations}
\noindent
Therefore, the central path is an arc in ${\bf R}^{2n+m}$ parameterized as a 
function of $t$ and is denoted as 
\begin{equation}
{\cal C}(t) = \lbrace(x(t), \lambda(t), s(t)): t \ge 0 \rbrace.
\end{equation} 
As $t \rightarrow 0$, the central path $(x(t), \lambda(t), 
s(t))$ represented by (\ref{centralpath}) approaches to a solution of LP 
represented by (\ref{LP}) because (\ref{centralpath}) reduces to the KKT condition
as $t \rightarrow 0$. 

Because of high cost of finding an initial feasible point and the central path described in (\ref{centralpath}), 
we consider  a modified problem which allows infeasible initial point.
\begin{subequations}
\begin{align}
Ax-b = r_b \label{ipatha} \\
A^{\T}\lambda+s-c = r_c \label{ipathb} \\
(x,s) \ge 0 \label{ipathd}  \\
x_is_i = t, \hspace{0.1in} i=1,\ldots,n \label{ipathc}.
\end{align}
\label{modifiedcentralpath}
\end{subequations}
We search the optimizer along an infeasible central path neighborhood. 
The infeasible central path neighborhood ${\cal F}(\gamma)$ considered in this paper is defined as a 
collection of points that satisfy the following conditions,
\begin{equation}
{\cal F}(\gamma(t))=\lbrace(x, \lambda, s): \|(r_b(t), r_c(t)) \| \le 
\gamma(t)  \| (r_b^0, r_c^0)  \|,\hspace{0.01in} 
(x,s) > 0  \rbrace,
\label{infeasible}
\end{equation}
where $r_b(1)=r_b^0$, $r_c(1)=r_c^0$, $\gamma(t) \in [0, 1]$ is a monotonic function of $t$ such that 
$\gamma(1)=1$ and $\gamma(t) \rightarrow 0$ as $t \rightarrow 0$. It is worthwhile to note that this 
central path neighborhood is the widest in any neighborhood considered in existing literatures.

\section{Arc-Search Algorithm for Linear Programming}

Starting from any point $(x^0, \lambda^0, s^0)$ in a central path neighborhood that satisfies $(x^0,s^0)>0$, 
for $k \ge 0$, we consider a special arc parameterized by $t$ and defined by the current iterate as follows:
\begin{subequations}
\begin{align}
Ax(t)-b =t r_b^k, \label{arca} \\
A^{\T}\lambda(t)+s(t)-c = t r_c^k, \label{arcb} \\
(x(t),s(t)) > 0, \label{arcd}  \\
x(t) \circ s(t) = t x^k \circ s^k.  \label{arcc}
\end{align}
\label{neiborArc}
\end{subequations}
Clearly, each iteration starts at $t=1$; and $(x(1), \lambda (1), s(1))= (x^k, \lambda^k, s^k)$. We
want the iterate stays inside ${\cal F}(\gamma)$ as $t$ decreases. We denote the infeasible central path 
defined by (\ref{neiborArc}) as 
\begin{equation}
{\cal H}(t)= \lbrace(x(t), \lambda(t), s(t)): t \ge \tau \ge 0 \rbrace.
\end{equation}
If this arc is inside ${\cal F}(\gamma)$ for $\tau =0$, then as $t \rightarrow 0$, 
$(r_b (t), r_c (t)):=t (r_b^k, r_c^k)  \rightarrow 0$; 
and equation (\ref{arcc}) implies that $\mu (t) \rightarrow 0$;
hence, the arc will approach to an optimal solution of (\ref{LP}) because (\ref{neiborArc}) 
reduces to KKT condition as $t \rightarrow 0$. To avoid computing the entire infeasible central path ${\cal H}(t)$, 
we will 
search along an approximation of ${\cal H}(t)$ and keep the iterate stay in ${\cal F}(\gamma)$.
Therefore, we will use an ellipse ${\cal E}(\alpha)$ \cite{carmo76} in $2n+m$ dimensional 
space to approximate the infeasible central path ${\cal H}(t)$, where ${\cal E}(\alpha)$ is given by
\begin{equation}
{\cal E}(\alpha)=\lbrace (x(\alpha), \lambda(\alpha), s(\alpha)): 
(x(\alpha), \lambda(\alpha), s(\alpha))=
\vec{a}\cos(\alpha)+\vec{b}\sin(\alpha)+\vec{c} \rbrace,
\label{ellipse}
\end{equation}
$\vec{a} \in \R^{2n+m}$ and $\vec{b} \in \R^{2n+m}$ are the axes of the 
ellipse, and $\vec{c} \in \R^{2n+m}$ is the center of the ellipse. Given the current iterate
$y=(x^k, \lambda^k, s^k)=(x(\alpha_0), \lambda(\alpha_0), s(\alpha_0)) \in {\cal E}(\alpha)$ 
which is also on ${\cal H}(t)$, we will determine 
$\vec{a}$, $\vec{b}$, $\vec{c}$ and $\alpha_0$ such that 
the first and second derivatives of ${\cal E}(\alpha)$ at 
$(x(\alpha_0), \lambda(\alpha_0), s(\alpha_0))$ are the same as those of
${\cal H}(t)$ at $(x(\alpha_0), \lambda(\alpha_0), s(\alpha_0))$. Therefore, by taking the first 
derivative for (\ref{neiborArc}) at 
$(x(\alpha_0), \lambda(\alpha_0), s(\alpha_0)) = (x^k, \lambda^k, s^k) \in \cal{E}$, we have
\begin{equation}
\left[
\begin{array}{ccc}
A & 0 & 0\\
0 & A^{\T} & I \\
S^k & 0 & X^k
\end{array}
\right]
\left[
\begin{array}{c}
\dot{{x}} \\ \dot{\lambda}  \\  \dot{{s}}
\end{array}
\right]
=\left[
\begin{array}{c}
r_b^k \\ r_c^k \\ {x^k} \circ {s^k} 
\end{array}
\right],
\label{doty}
\end{equation}

These linear systems of equations are very similar to those used in \cite{yang13} except that equality 
constraints in (\ref{centralpath}) are not assumed to be satisfied. By taking the second derivative, we have 
\begin{equation}
\left[
\begin{array}{ccc}
A & 0 & 0\\
0 & A^{\T} & I \\
S^k & 0 & X^k
\end{array}
\right]
\left[
\begin{array}{c}
\ddot{x} \\ \ddot{\lambda} \\  \ddot{s} 
\end{array}
\right]
=\left[
\begin{array}{c}
0 \\ 0 \\ -2\dot{x} \circ \dot{s}
\end{array}
\right].
\label{tmp}
\end{equation}
Similar to \cite{Mehrotra92}, we modify (\ref{tmp}) slightly to make sure that a substantial segment
of the ellipse stays in ${\cal F}(t)$, thereby making sure that the step size along the ellipse is significantly 
greater than zero,
\begin{equation}
\left[
\begin{array}{ccc}
A & 0 & 0\\
0 & A^{\T} & I \\
S^k & 0 & X^k
\end{array}
\right]
\left[
\begin{array}{c}
\ddot{x}(\sigma_k) \\ \ddot{\lambda}(\sigma_k)  \\  \ddot{s}(\sigma_k)
\end{array}
\right]
=\left[
\begin{array}{c}
0 \\ 0 \\ \sigma_k \mu_k e -2\dot{x} \circ \dot{s}
\end{array}
\right],
\label{ddoty}
\end{equation}
where the duality measure $\mu_k$ is evaluated at $(x^k, \lambda^k, s^k)$,
and we set the centering parameter $\sigma_k$ satisfying $0< \sigma_k < \sigma_{\max} \le 0.5$. 
We emphasize that the second derivatives are functions of $\sigma_k$ which is selected by
using a heuristic of \cite{Mehrotra92} to speed up the convergence of the proposed algorithm.
Several relations follow immediately from (\ref{doty}) and (\ref{ddoty}).
\begin{lemma}
Let $(\dot{x}, \dot{\lambda}, \dot{s})$ and $(\ddot{x}, \ddot{\lambda}, \ddot{s})$ be defined in (\ref{doty}) 
and (\ref{ddoty}). Then, the following relations hold.
\begin{equation}
s^{\T} \dot{x} + x^{\T} \dot{s} = x^{\T} {s}=n\mu,
\hspace{0.1in}
s^{\T} \ddot{x} + x^{\T} \ddot{s} = \sigma \mu n-2\dot{x}^{\T} \dot{s},
\hspace{0.1in}
\ddot{x}^{\T} \ddot{s} =0.
\end{equation}
\label{simple}
\end{lemma}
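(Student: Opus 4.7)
The plan is to read each identity directly off one of the three block rows of the linear systems (\ref{doty}) and (\ref{ddoty}), taking inner products with $e$ where needed.

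First I would treat the first identity. The third block row of (\ref{doty}) gives the vector equation $S^k \dot{x} + X^k \dot{s} = x^k \circ s^k$. Left-multiplying by $e^{\T}$, and using that $e^{\T} X^k v = (x^k)^{\T} v$ and $e^{\T} S^k v = (s^k)^{\T} v$ for any $v$ (since $X^k$ and $S^k$ are diagonal with the entries of $x^k$ and $s^k$), I immediately get $s^{\T}\dot{x} + x^{\T}\dot{s} = x^{\T} s$, and the definition (\ref{duality}) of $\mu$ rewrites the right-hand side as $n\mu$.

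Next, for the second identity I do exactly the same thing to the third block row of (\ref{ddoty}): the equation $S^k \ddot{x} + X^k \ddot{s} = \sigma_k \mu_k e - 2 \dot{x} \circ \dot{s}$, pre-multiplied by $e^{\T}$, yields $s^{\T}\ddot{x} + x^{\T}\ddot{s} = \sigma_k \mu_k (e^{\T}e) - 2 \dot{x}^{\T}\dot{s} = \sigma_k \mu_k n - 2\dot{x}^{\T}\dot{s}$, which is the stated equality.

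The only step with any content is the third identity, $\ddot{x}^{\T}\ddot{s}=0$. For this I use the first two block rows of (\ref{ddoty}), which give $A\ddot{x}=0$ and $A^{\T}\ddot{\lambda} + \ddot{s} = 0$, i.e.\ $\ddot{s} = -A^{\T}\ddot{\lambda}$. Then
\[
\ddot{x}^{\T}\ddot{s} = -\ddot{x}^{\T} A^{\T}\ddot{\lambda} = -(A\ddot{x})^{\T}\ddot{\lambda} = 0.
\]
No step is truly hard; the mildest subtlety is just noticing that the orthogonality $\ddot{x}^{\T}\ddot{s}=0$ is a consequence of the linear constraint rows rather than of the complementarity row, and in particular that the right-hand sides of those first two rows being zero (as opposed to $r_b^k$ and $r_c^k$ in (\ref{doty})) is exactly what makes the argument go through for $(\ddot{x},\ddot{s})$ but not for $(\dot{x},\dot{s})$.
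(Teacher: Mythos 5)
Your proof is correct and is exactly the argument the paper intends: the paper states Lemma \ref{simple} without proof as following ``immediately'' from (\ref{doty}) and (\ref{ddoty}), and your derivation (premultiplying the third block rows by $e^{\T}$ and using $A\ddot{x}=0$, $\ddot{s}=-A^{\T}\ddot{\lambda}$ for the orthogonality) is the standard way to read these off. Your closing observation about why $\ddot{x}^{\T}\ddot{s}=0$ holds while $\dot{x}^{\T}\dot{s}$ does not vanish (the zero versus $r_b^k,r_c^k$ right-hand sides) is accurate and consistent with how the paper later handles $\dot{x}^{\T}\dot{s}$ in Lemma \ref{simple2}.
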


Equations (\ref{doty}) and (\ref{ddoty}) can be solved in either unreduced form, or augmented system form, 
or normal equation form as suggested in \cite{wright97}. We solve the normal equations 
for $(\dot{x}, \dot{\lambda}, \dot{s})$ and $(\ddot{x}, \ddot{\lambda}, \ddot{s})$ as follows:
\begin{subequations}
\begin{gather}
( {A}XS^{-1} {A}^{\T}) \dot{\lambda}=A X S^{-1} r_c -b, \label{pl2} \\
\dot{s}= r_c-{A}^{\T} \dot{\lambda}, \label{ps2}  \\
\dot{x}= x-XS^{-1} \dot{s}, \label{px2}
\end{gather}
\label{doy2}
\end{subequations}
and
\begin{subequations}
\begin{gather}
( {A}XS^{-1} {A}^{\T}) \ddot{\lambda}=-{A}S^{-1} (\sigma \mu n-2\dot{x} \circ \dot{s}), 
\label{ddl2} \\
\ddot{s}=-{A}^{\T} \ddot{\lambda}. 
\label{dds2} \\
\ddot{x}=S^{-1}(\sigma \mu e-X \ddot{s}-2\dot{x} \circ \dot{s}). 
\label{ddx2}
\end{gather}
\label{ddoy2}
\end{subequations}
Given the first and second derivatives defined by (\ref{doty}) and (\ref{ddoty}),
an analytic expression of the ellipse that is used to approximate the infeasible central path is derived in \cite{yang13}.  
\begin{theorem}
Let $(x(\alpha),\lambda(\alpha),s(\alpha))$ be an arc defined by 
(\ref{ellipse}) passing
through a point $(x,\lambda,s)\in {\cal E} \cap {\cal H}$, and its first and second 
derivatives at $(x,\lambda,s)$ be $(\dot{x}, \dot{\lambda}, \dot{s})$ and 
$(\ddot{x}, \ddot{\lambda}, \ddot{s})$ which are defined by
(\ref{doty}) and (\ref{ddoty}). Then the ellipse approximation of ${\cal H}(t)$ is given by
\begin{equation}
x(\alpha,\sigma) = x - \dot{x}\sin(\alpha)+\ddot{x}(\sigma) (1-\cos(\alpha)).
\end{equation}
\begin{equation}
\lambda(\alpha,\sigma) = \lambda-\dot{\lambda}\sin(\alpha)
+\ddot{\lambda}(\sigma) (1-\cos(\alpha)).
\end{equation}
\begin{equation}
s(\alpha,\sigma) = s - \dot{s}\sin(\alpha)+\ddot{s}(\sigma) (1-\cos(\alpha)).
\end{equation}
\label{ellipseSX}
\end{theorem}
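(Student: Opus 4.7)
The plan is to determine the three vectors $\vec{a}, \vec{b}, \vec{c} \in \R^{2n+m}$ (and the scalar $\alpha_0$) appearing in the ellipse parameterization (\ref{ellipse}) from the three matching conditions stated in the paragraph preceding the theorem: the ellipse must pass through the current iterate $(x, \lambda, s)$, and its first and second derivatives with respect to $\alpha$ must agree with the derivatives of the infeasible central path $\mathcal{H}(t)$ computed in (\ref{doty}) and (\ref{ddoty}). A uniform shift of $\alpha$ leaves the image of the ellipse unchanged, so I will take $\alpha_0 = 0$ without loss of generality; this removes the scalar degree of freedom and leaves only $\vec{a}, \vec{b}, \vec{c}$ to pin down.

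First I would differentiate (\ref{ellipse}) twice and evaluate at $\alpha = 0$, obtaining the three linear relations
\[
(x,\lambda,s) = \vec{a} + \vec{c}, \qquad \tfrac{d}{d\alpha}(x,\lambda,s)\big|_{0} = \vec{b}, \qquad \tfrac{d^2}{d\alpha^2}(x,\lambda,s)\big|_{0} = -\vec{a}.
\]
Since $\mathcal{H}$ is parameterized by $t$ while $\mathcal{E}$ is parameterized by $\alpha$, matching the left-hand sides of the last two relations with the data from (\ref{doty})--(\ref{ddoty}) requires a reparameterization $t = t(\alpha)$ with $t(0) = 1$. The natural normalization is $t'(0) = -1$ and $t''(0) = 0$, because we want $\alpha$ to increase as $t$ decreases toward the optimizer, and we do not want a second-order correction to mix the first derivative into the second. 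The chain rule then gives
\[
\tfrac{d}{d\alpha}(x,\lambda,s)\big|_{0} = -(\dot{x}, \dot{\lambda}, \dot{s}), \qquad \tfrac{d^2}{d\alpha^2}(x,\lambda,s)\big|_{0} = (\ddot{x}, \ddot{\lambda}, \ddot{s}).
\]

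Solving the three linear relations now yields $\vec{b} = -(\dot{x}, \dot{\lambda}, \dot{s})$, $\vec{a} = -(\ddot{x}, \ddot{\lambda}, \ddot{s})$, and $\vec{c} = (x, \lambda, s) + (\ddot{x}, \ddot{\lambda}, \ddot{s})$. Substituting these into (\ref{ellipse}) and collecting the constant, $\sin\alpha$, and $1-\cos\alpha$ pieces produces
\[
(x(\alpha), \lambda(\alpha), s(\alpha)) = (x, \lambda, s) - (\dot{x}, \dot{\lambda}, \dot{s})\sin\alpha + (\ddot{x}, \ddot{\lambda}, \ddot{s})(1-\cos\alpha),
\]
which is exactly the three displayed formulas of the theorem when read coordinate-wise. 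The dependence on $\sigma$ is inherited unchanged from the right-hand side of (\ref{ddoty}), since $\sigma$ never enters the geometric identification of the ellipse.

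The step I would take most care with, and the only place where something nontrivial happens, is the sign and normalization in the reparameterization $t(\alpha)$: the choice $t'(0) = -1$ is what produces the minus sign in front of $\sin\alpha$, and any other linear reparameterization would simply rescale $\alpha$, which is cosmetic. Beyond this bookkeeping the argument is a straightforward second-order Taylor match between a fixed-shape planar ellipse (embedded in $\R^{2n+m}$ through $\vec{a}, \vec{b}, \vec{c}$) and the curve $\mathcal{H}(t)$ at a single base point, so I do not anticipate any nontrivial obstacle once the orientation convention is fixed.
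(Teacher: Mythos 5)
Your derivation is correct, and it is essentially the argument the paper relies on: the paper itself gives no proof of Theorem \ref{ellipseSX} but defers to \cite{yang13}, where the formula is obtained by exactly this second-order matching --- evaluate (\ref{ellipse}) and its first two $\alpha$-derivatives at the base point, identify them with $(x,\lambda,s)$, $-(\dot{x},\dot{\lambda},\dot{s})$ and $(\ddot{x},\ddot{\lambda},\ddot{s})$, solve for $\vec{a}$, $\vec{b}$, $\vec{c}$, and substitute back. The only inaccuracy is the side remark that other linear reparameterizations are ``cosmetic'': taking $t'(0)=-c$ with $c\neq 1$ scales $\vec{b}$ by $c$ but $\vec{a}$ and the center by $c^2$, producing a genuinely different ellipse, so the normalization $t'(0)=-1$, $t''(0)=0$ is a real (if standard) convention rather than a harmless rescaling --- but this does not affect the validity of the stated formulas.
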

In the algorithm proposed below, we suggest taking step size $\alpha^s_k =\alpha^{\lambda}_k$ which may not be 
equal to the step size of $\alpha^x_k$. 

\begin{algorithm} {\ } \\ 
Data: $A$, $b$, $c$, and step scaling factor $\beta \in (0,1)$.  {\ } \\
Initial point: ${\lambda}^0=0$, $x^0>0$, $s^0>0$, and 
$\mu_0= \frac{{x^0}^{\T}s^0}{n}$.    {\ } \\
{\bf for} iteration $k=0,1,2,\ldots$
\begin{itemize}
\item[] Step 1: Calculate $(\dot{x},\dot{\lambda},\dot{s})$ using (\ref{doy2}) and set
\begin{subequations}
\begin{gather}
\alpha_x^a:=\arg \max \{ \alpha \in [0,1] | x-\alpha \dot{x} \ge 0 \},
\\
\alpha_s^a:=\arg \max \{ \alpha \in [0,1] | s-\alpha \dot{s} \ge 0 \}.
\end{gather}
\end{subequations} 
\item[] Step 2: Calculate $\mu^a =\frac{(x+\alpha^a_x)^{\T}( s+\alpha^a_s)}{n}$ and compute the centering parameter
\begin{equation}
\sigma=\left( \frac{\mu^a}{\mu} \right)^3.
\end{equation}
\item[] Step 3: Computer $(\ddot{x},\ddot{\lambda},\ddot{s})$ using (\ref{ddoy2}).
\item[] Step 4: Set
\begin{subequations}
\begin{align}
\alpha^x = \arg \max\{ \alpha \in [0,\frac{\pi}{2}] | x^k - \dot{x}\sin(\alpha)+\ddot{x} (1-\cos(\alpha)) \ge 0  \}, 
\label{updatex} \\
\alpha^s = \arg \max\{ \alpha \in [0, \frac{\pi}{2}] | s^k - \dot{s}\sin(\alpha)+\ddot{s} (1-\cos(\alpha)) \ge 0 \}. 
\label{updates} 
\end{align}
\label{update1}
\end{subequations}
\item[] Step 5: Scale the step size by $\alpha^x_k=\beta \alpha^x$ and
$\alpha^s_k=\beta \alpha^s$ such that the update
\begin{subequations}
\begin{gather}
x^{k+1}= x^k - \dot{x}\sin(\alpha^x_k)+\ddot{x} (1-\cos(\alpha^x_k))>0, \\
\lambda^{k+1} =\lambda^k -\dot{\lambda }\sin(\alpha^s_k)
+\ddot{\lambda} (1-\cos(\alpha^s_k)),  
\\
s^{k+1} = s^k - \dot{s}\sin(\alpha^s_k)+\ddot{s} (1-\cos(\alpha^s_k))>0. 
\end{gather}
\label{update2}
\end{subequations}
\item[] Step 6: Set $k \leftarrow k+1 $. Go back to Step 1.
\end{itemize}
{\bf end (for)} 
\hfill \qed
\label{mainAlgo3}
\end{algorithm}

\begin{remark}
The main difference between the proposed algorithm and Mehrotra's algorithm is in Steps 4 and 5 where 
the iterate moves along the ellipse instead of a straight line. More specifically, instead of using 
(\ref{update1}) and (\ref{update2}), Mehrotra's method uses
\begin{subequations}
\begin{align}
\alpha^x = \arg \max\{ \alpha \in [0,1] | x^k - \alpha (\dot{x}- \ddot{x}) \ge 0  \}, \\
\alpha^s = \arg \max\{ \alpha \in [0,1] | s^k - \alpha(\dot{s}- \ddot{s} )\ge 0 \}. 
\end{align}
\label{update1a}
\end{subequations}
and
\begin{subequations}
\begin{gather}
x^{k+1}= x^k - \alpha^x_k (\dot{x} - \ddot{x}) >0, \\
\lambda^{k+1} =\lambda^k -\alpha^s_k (\dot{\lambda }
- \ddot{\lambda} ),  
\\
s^{k+1} = s^k - \alpha^s_k (\dot{s}- \ddot{s})>0. 
\end{gather}
\label{update2a}
\end{subequations}
Note that the end points of arc-search algorithm $(\alpha^x_k, \alpha^s_k)=(0,0)$
or $(\alpha^x_k, \alpha^s_k)=(\frac{\pi}{2},\frac{\pi}{2})$ in (\ref{update1}) and 
(\ref{update2}) are equat to the end points of Mehrotra's formulae in (\ref{update1a})
and (\ref{update2a}); for any 
$(\alpha^x_k, \alpha^s_k)$ between $(0, \frac{\pi}{2})$, the ellipse is a
better approximation of the infeasible central path. Therefore, the proposed algorithm should
have a larger step size than Mehrotra's method and be more efficient. 
This intuitive has been verified in our numerical test.
\end{remark}

The following lemma shows that searching along the ellipse in iterations will reduce the residuals of the 
equality constraints to zero as $k \rightarrow \infty$ provided that $\alpha^x_k$ and $\alpha^s_k$ are 
bounded below from zero.

\begin{lemma}
Let $r_b^k = Ax^k-b$, $r_c^k=A^{\T} \lambda^k +s^k -c$, 
$\varrho_k=\prod_{j=0}^{k-1} (1-\sin(\alpha^x_j))$. and 
$\nu_k=\prod_{j=0}^{k-1} (1-\sin(\alpha^s_j))$. Then, the following 
relations hold.
\begin{subequations}
\begin{align}
r_b^k = r_b^{k-1} (1-\sin(\alpha^x_{k-1})) = \cdots = r_b^0 \prod_{j=0}^{k-1} (1-\sin(\alpha^x_j))
= r_b^0 \varrho_k, \label{a}
\\
r_c^k = r_c^{k-1} (1-\sin(\alpha^s_{k-1})) = \cdots = r_c^0 \prod_{j=0}^{k-1} (1-\sin(\alpha^s_j))
= r_c^0 \nu_k. \label{b}
\end{align}
\label{errorUpdate}
\end{subequations}
\label{basic}
\end{lemma}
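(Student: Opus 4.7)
The plan is to prove the two recursions
$r_b^{k} = r_b^{k-1}(1-\sin(\alpha^x_{k-1}))$ and $r_c^{k} = r_c^{k-1}(1-\sin(\alpha^s_{k-1}))$
directly from the update formulas in Step 5 of the algorithm, and then apply induction on $k$ to obtain the stated product expressions.

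First I would read off from systems (\ref{doty}) and (\ref{ddoty}) the four identities that make the argument work: $A\dot{x}=r_b^k$ and $A^{\T}\dot{\lambda}+\dot{s}=r_c^k$ from the first derivative system, and $A\ddot{x}=0$ together with $A^{\T}\ddot{\lambda}+\ddot{s}=0$ from the second derivative system. These are immediate: the top two block rows of both coefficient matrices are identical, but the right-hand side of the second system has zeros in its first two block rows. This is exactly where the construction pays off; the arc's second-derivative contribution is designed to leave the equality residuals untouched.

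Next I would substitute the Step 5 updates into the definitions of $r_b^{k+1}$ and $r_c^{k+1}$ and collect terms. For the primal residual,
\begin{equation*}
r_b^{k+1} = Ax^{k+1}-b = (Ax^k-b) - (A\dot{x})\sin(\alpha^x_k) + (A\ddot{x})(1-\cos(\alpha^x_k)) = r_b^k(1-\sin(\alpha^x_k)),
\end{equation*}
using $A\dot{x}=r_b^k$ and $A\ddot{x}=0$. For the dual residual, exactly the same manipulation using $A^{\T}\dot{\lambda}+\dot{s}=r_c^k$ and $A^{\T}\ddot{\lambda}+\ddot{s}=0$ yields $r_c^{k+1}=r_c^k(1-\sin(\alpha^s_k))$. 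Note that the dual update uses the common step size $\alpha^s_k$ for both $\lambda$ and $s$, which is precisely what lets the two terms in $A^{\T}\lambda^{k+1}+s^{k+1}-c$ telescope into a single factor $(1-\sin(\alpha^s_k))$; without the convention $\alpha^\lambda_k=\alpha^s_k$ this step would fail.

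Finally I would close the argument by induction: unrolling the one-step recursions from $k$ down to $0$ produces the telescoping products $\varrho_k=\prod_{j=0}^{k-1}(1-\sin(\alpha^x_j))$ and $\nu_k=\prod_{j=0}^{k-1}(1-\sin(\alpha^s_j))$, exactly matching (\ref{a}) and (\ref{b}). There is no real obstacle here; the only point that requires care is verifying that the second-derivative terms contribute nothing to either residual, which is guaranteed structurally by the zero blocks on the right-hand side of (\ref{ddoty}).
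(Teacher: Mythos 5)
Your proposal is correct and follows essentially the same route as the paper: both extract $A\dot{x}=r_b^k$, $A\ddot{x}=0$, $A^{\T}\dot{\lambda}+\dot{s}=r_c^k$, $A^{\T}\ddot{\lambda}+\ddot{s}=0$ from (\ref{doty}) and (\ref{ddoty}), substitute the Step 5 updates to get the one-step recursions $r_b^{k+1}=r_b^k(1-\sin(\alpha^x_k))$ and $r_c^{k+1}=r_c^k(1-\sin(\alpha^s_k))$, and unroll. The only cosmetic difference is that the paper computes the differences $r_b^{k+1}-r_b^k$ and $r_c^{k+1}-r_c^k$ rather than the residuals directly, and your remark about the shared step size $\alpha^{\lambda}_k=\alpha^s_k$ being essential for the dual residual to telescope is a correct observation that the paper leaves implicit.
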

\begin{proof}
From Theorem \ref{ellipseSX}, searching along ellipse generates iterate
as follows.
\begin{subequations}
\begin{gather}
x^{k+1} - x^k= - \dot{x}\sin(\alpha^x_k)+\ddot{x} (1-\cos(\alpha^x_k)), \nonumber \\
\lambda^{k+1} - \lambda^k=-\dot{\lambda }\sin(\alpha^s_k)
+\ddot{\lambda} (1-\cos(\alpha^s_k)),  \nonumber \\
s^{k+1} - s^k = - \dot{s}\sin(\alpha^s_k)+\ddot{s} (1-\cos(\alpha^s_k)). \nonumber
\end{gather}
\label{diff}
\end{subequations}
In view of (\ref{doty}) and (\ref{ddoty}), we have 
\begin{eqnarray}
r_b^{k+1}-r_b^{k} & = & A (x^{k+1}-x^{k}) = A(-\dot{x} \sin(\alpha^x_{k})+\ddot{x}(1-\cos(\alpha^x_{k}) )
\nonumber \\
& = & -A \dot{x}  \sin(\alpha^x_{k}) = -r_b^{k} \sin(\alpha^x_{k}),
\end{eqnarray}
therefore, $r_b^{k+1} = r_b^{k} (1-\sin(\alpha^x_{k}))$; this proves (\ref{a}).
Similarly,
\begin{eqnarray}
r_c^{k+1}-r_c^{k} & = & A^{\T} (\lambda^{k+1}-\lambda^{k}) + (s^{k+1}-s^{k}) \nonumber \\
& = &  A^{\T} (-\dot{\lambda} \sin(\alpha^s_{k})+\ddot{\lambda}(1-\cos(\alpha^s_{k}) )
-\dot{s} \sin(\alpha^s_{k})+\ddot{s}(1-\cos(\alpha^s_{k}) )
\nonumber \\
& = & -(A^{\T} \dot{\lambda} + \dot{s}) \sin(\alpha^s_{k}) 
   + (A^{\T} \ddot{\lambda} + \ddot{s}) (1-\cos(\alpha^s_{k}))
\nonumber \\
& = & -r_c^{k} \sin(\alpha^s_{k}),
\end{eqnarray}
therefore, $r_c^{k+1} = r_c^{k} (1-\sin(\alpha^s_{k}))$; this proves (\ref{b}).
\hfill \qed
\end{proof}

To show that the duality measure decreases with iterations, we present the following lemma.
\begin{lemma}
Let ${\alpha}_x$ be the step length for $x(\sigma, \alpha)$ and $\alpha_s$ be the step length for 
$s(\sigma, \alpha)$ and $\lambda(\sigma, \alpha)$ defined in Theorem \ref{ellipseSX}. 
Assume that ${\alpha}_x =\alpha_s := \alpha $, then, the updated duality measure can be expressed as 
\begin{eqnarray}
\mu(\alpha) & = & \mu [ 1-\sin(\alpha)+\sigma (1-\cos(\alpha))]
\nonumber \\
& + & \frac{1}{n}\left[ (\ddot{x}^{\T}r_c-\ddot{\lambda}^{\T}r_b ) \sin(\alpha)(1-\cos(\alpha))
+(\dot{x}^{\T}r_c-\dot{\lambda}^{\T}r_b ) (1-\cos(\alpha))^2 \right].
\label{measure}
\end{eqnarray}
\label{simple2}
\end{lemma}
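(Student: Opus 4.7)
The plan is to compute $\mu(\alpha) = x(\alpha)^{\T}s(\alpha)/n$ directly from the ellipse formulas in Theorem \ref{ellipseSX}, expand the product, and then use Lemma \ref{simple} together with the defining linear systems \eqref{doty}, \eqref{ddoty} to rewrite the cross terms in terms of the residuals $r_b$ and $r_c$.

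First, I would substitute
\[
x(\alpha) = x - \dot{x}\sin(\alpha) + \ddot{x}(1-\cos(\alpha)), \qquad s(\alpha) = s - \dot{s}\sin(\alpha) + \ddot{s}(1-\cos(\alpha))
\]
and multiply out the nine terms of $x(\alpha)^{\T}s(\alpha)$. Grouping by the trigonometric factor, Lemma \ref{simple} immediately kills three pieces at once: the ``constant'' piece collapses to $n\mu$, the coefficient of $\sin(\alpha)$ becomes $-(s^{\T}\dot{x}+x^{\T}\dot{s})=-n\mu$, the coefficient of $(1-\cos(\alpha))$ becomes $s^{\T}\ddot{x}+x^{\T}\ddot{s}=\sigma\mu n-2\dot{x}^{\T}\dot{s}$, and the $(1-\cos(\alpha))^2$ piece $\ddot{x}^{\T}\ddot{s}$ vanishes. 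The leftover is a $\sin^2(\alpha)$ term with coefficient $\dot{x}^{\T}\dot{s}$ and a $\sin(\alpha)(1-\cos(\alpha))$ term with coefficient $-(\dot{x}^{\T}\ddot{s}+\ddot{x}^{\T}\dot{s})$.

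Next, I would express those remaining inner products in terms of the residuals. From $\dot{s}=r_c-A^{\T}\dot{\lambda}$ and $A\dot{x}=r_b$, one gets $\dot{x}^{\T}\dot{s}=\dot{x}^{\T}r_c-\dot{\lambda}^{\T}r_b$. Similarly, from $\ddot{s}=-A^{\T}\ddot{\lambda}$ together with $A\dot{x}=r_b$, and from $A\ddot{x}=0$ together with $\dot{s}=r_c-A^{\T}\dot{\lambda}$, one obtains $\dot{x}^{\T}\ddot{s}=-\ddot{\lambda}^{\T}r_b$ and $\ddot{x}^{\T}\dot{s}=\ddot{x}^{\T}r_c$, so that $\dot{x}^{\T}\ddot{s}+\ddot{x}^{\T}\dot{s}=\ddot{x}^{\T}r_c-\ddot{\lambda}^{\T}r_b$.

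The final step is the trigonometric consolidation. The coefficient of $\dot{x}^{\T}\dot{s}$ in the expansion is $\sin^2(\alpha)-2(1-\cos(\alpha))$, and the identity
\[
\sin^2(\alpha)-2(1-\cos(\alpha)) = -(1-\cos(\alpha))^2
\]
(which follows from $\sin^2(\alpha)=1-\cos^2(\alpha)=(1-\cos(\alpha))(1+\cos(\alpha))$) merges the $\sin^2(\alpha)$ and $(1-\cos(\alpha))$ contributions into a single $(1-\cos(\alpha))^2$ piece. Dividing by $n$ and collecting yields the stated formula. I do not anticipate any serious obstacle here; the one place a sign could slip is in the trig identity and in the $\dot{\lambda}^{\T}r_b$ vs.\ $\dot{x}^{\T}r_c$ bookkeeping, so I would double-check those by re-deriving $\dot{x}^{\T}\dot{s}$ two ways (via $\dot{s}=r_c-A^{\T}\dot{\lambda}$ and directly from the third block of \eqref{doty}) as a consistency check.
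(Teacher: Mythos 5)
Your strategy is exactly the paper's: expand $x(\alpha)^{\T}s(\alpha)$, collapse the constant, $\sin(\alpha)$, $(1-\cos(\alpha))$ and $\ddot{x}^{\T}\ddot{s}$ pieces with Lemma \ref{simple}, convert the surviving inner products to residual form via (\ref{doty}) and (\ref{ddoty}), and merge the leftovers with $\sin^2(\alpha)-2(1-\cos(\alpha))=-(1-\cos(\alpha))^2$. All of your intermediate identities are correct. The difficulty is the last sentence: with your values the final collection does \emph{not} land on (\ref{measure}) but on the formula with the bracketed term negated. Concretely, the $(1-\cos(\alpha))^2$ coefficient of $n\mu(\alpha)$ is $\ddot{x}^{\T}\ddot{s}-\dot{x}^{\T}\dot{s}=-\dot{x}^{\T}\dot{s}$, and with your identity $\dot{x}^{\T}\dot{s}=\dot{x}^{\T}r_c-\dot{\lambda}^{\T}r_b$ this equals $\dot{\lambda}^{\T}r_b-\dot{x}^{\T}r_c$, whereas (\ref{measure}) has $\dot{x}^{\T}r_c-\dot{\lambda}^{\T}r_b$; the same flip occurs in the $\sin(\alpha)(1-\cos(\alpha))$ term, whose coefficient $-(\dot{x}^{\T}\ddot{s}+\ddot{x}^{\T}\dot{s})$ equals $\ddot{\lambda}^{\T}r_b-\ddot{x}^{\T}r_c$ by your (correct) computations, not $\ddot{x}^{\T}r_c-\ddot{\lambda}^{\T}r_b$.

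The source of the mismatch is that the paper's own proof reaches (\ref{measure}) by asserting $\dot{x}^{\T}\dot{s}=\dot{\lambda}^{\T}r_b-\dot{x}^{\T}r_c$, the opposite sign from yours. Taking the inner product of $\dot{x}$ with the second block of (\ref{doty}), $A^{\T}\dot{\lambda}+\dot{s}=r_c$, and using $A\dot{x}=r_b$ gives $\dot{\lambda}^{\T}r_b+\dot{x}^{\T}\dot{s}=\dot{x}^{\T}r_c$, so your sign is the right one and the discrepancy lies in the paper's intermediate step (and hence in the sign of the whole bracket of (\ref{measure}) as printed). The consistency check you propose at the end would surface exactly this, but as written your argument cannot simultaneously keep your identities and conclude ``the stated formula'': you must either flip a sign in the collection step (making the derivation internally inconsistent) or end at (\ref{measure}) with the bracket negated. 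You should state explicitly which expression your expansion actually produces and flag the sign conflict with the statement; note that the only downstream use of the lemma (the remark that $\mu(\alpha)\approx\mu[1-\sin(\alpha)+\sigma(1-\cos(\alpha))]$ once the residuals are small) is insensitive to this sign.
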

\begin{proof} First, from (\ref{doty}) and (\ref{ddoty}), we have
\[ 
\dot{x}^{\T} A^{\T}\dot{\lambda}-\dot{x}^{\T}\dot{s}=
\dot{x}^{\T} r_c,
\]
this gives
\[
\dot{x}^{\T}\dot{s}=\dot{\lambda}^{\T} r_b -\dot{x}^{\T} r_c.
\]
Similarly,
\[ 
\dot{x}^{\T} \ddot{s} = -\dot{x}^{\T}A^{\T} \ddot{\lambda }=
-\ddot{\lambda }^{\T} r_b, \hspace{0.4in} 
\ddot{x}^{\T} \dot{s} = \ddot{x}^{\T} \dot{s} + \ddot{x}^{\T}A^{\T} \dot{\lambda }=
\ddot{x}^{\T} r_c.
\]
Using these relations with (\ref{duality}) and Lemmas \ref{simple}, we have
\begin{eqnarray}
\mu({\alpha}) 
& = & \left( x-\dot{x}\sin(\alpha_x)+\ddot{x}(1-\cos(\alpha_x)) \right)^{\T} 
\left( s - \dot{s}\sin(\alpha_s)+\ddot{s}(1-\cos(\alpha_s)) \right)/n
\nonumber \\
& = & \frac{x^{\T} s}{n}- \frac{ x^{\T} \dot{s} \sin(\alpha_s)+ s^{\T} \dot{x} \sin(\alpha_x)}{n}
+ \frac{ x^{\T} \ddot{s}(1-\cos(\alpha_s))  + s^{\T} \ddot{x}(1-\cos(\alpha_x)) }{n}
\nonumber \\ 
& & +  \frac{ \dot{x}^{\T}\dot{s}\sin(\alpha_s) \sin(\alpha_x) }{n} 
    - \frac{ \dot{x}^{\T} \ddot{s}\sin(\alpha_x) (1-\cos(\alpha_s))  + \dot{s}^{\T} \ddot{x}\sin(\alpha_s) (1-\cos(\alpha_x)) }{n}
\nonumber \\
& = & \mu [ 1-\sin(\alpha)+\sigma (1-\cos(\alpha))]
+ \frac{ \dot{x}^{\T}\dot{s}\sin^2(\alpha) - 2 \dot{x}^{\T}\dot{s}(1-\cos(\alpha))}{n}
\nonumber \\
& & - \frac{ \dot{x}^{\T} \ddot{s}\sin(\alpha) (1-\cos(\alpha))  + \dot{s}^{\T} \ddot{x}\sin(\alpha) (1-\cos(\alpha)) }{n}
\nonumber \\
& = & \mu [ 1-\sin(\alpha)+\sigma (1-\cos(\alpha))]
\nonumber \\
& + & \frac{1}{n}\left[ (\ddot{x}^{\T}r_c-\ddot{\lambda}^{\T}r_b ) \sin(\alpha)(1-\cos(\alpha))
+(\dot{x}^{\T}r_c-\dot{\lambda}^{\T}r_b ) (1-\cos(\alpha))^2 \right].
\label{diffstepsize}
\end{eqnarray}
This finishes the proof.
\hfill \qed
\end{proof}

The following simple result clearly holds.

\begin{lemma} For $\alpha \in [0, \frac{\pi}{2}]$,
\[
\sin^2(\alpha) =1-\cos^2(\alpha)  \ge 1-\cos(\alpha) \ge \frac{1}{2} \sin^2(\alpha).
\]
\label{sincos}
\hfill \qed
\end{lemma}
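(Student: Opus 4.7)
The plan is to exploit the elementary factorization $\sin^2(\alpha) = 1 - \cos^2(\alpha) = (1-\cos(\alpha))(1+\cos(\alpha))$, and then to bound the factor $1+\cos(\alpha)$ using the restriction $\alpha \in [0,\tfrac{\pi}{2}]$. On this interval $\cos(\alpha) \in [0,1]$, so $1+\cos(\alpha) \in [1,2]$. This gives both inequalities in one stroke.

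More concretely, I would first write the identity $\sin^2(\alpha) = (1-\cos(\alpha))(1+\cos(\alpha))$, which establishes the leftmost equality in the statement. Then, since $1+\cos(\alpha) \ge 1$ on the prescribed interval, I multiply $1-\cos(\alpha) \ge 0$ (which also holds on this interval) by the factor $1+\cos(\alpha)$ to conclude $\sin^2(\alpha) \ge 1-\cos(\alpha)$. For the remaining inequality, I use the upper bound $1+\cos(\alpha) \le 2$ to obtain $\sin^2(\alpha) \le 2(1-\cos(\alpha))$, which is the same as $1-\cos(\alpha) \ge \tfrac{1}{2}\sin^2(\alpha)$.

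There is really no substantive obstacle here; the only thing worth being careful about is the sign condition $1-\cos(\alpha) \ge 0$, which is used when multiplying the inequalities on $1+\cos(\alpha)$ by this factor to preserve the direction of the inequality. This is automatic on $[0,\tfrac{\pi}{2}]$ since $\cos(\alpha) \le 1$ throughout. The whole argument is just a few lines and uses no calculus or estimation beyond the factorization and the range of $\cos$ on $[0,\tfrac{\pi}{2}]$.
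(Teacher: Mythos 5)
Your proof is correct. The paper offers no proof at all for this lemma --- it is introduced with ``The following simple result clearly holds'' and closed with a \qed --- so there is nothing to compare against; your factorization $\sin^2(\alpha) = (1-\cos(\alpha))(1+\cos(\alpha))$ together with the bounds $1 \le 1+\cos(\alpha) \le 2$ and the sign condition $1-\cos(\alpha)\ge 0$ on $[0,\tfrac{\pi}{2}]$ is exactly the argument the author is implicitly relying on, and you have supplied it cleanly.
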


\begin{remark}
In view of Lemma \ref{basic}, if $\sin(\alpha)$ is bounded below from zero, then $r_b \rightarrow 0$ 
and $r_c  \rightarrow 0$ as $k \rightarrow \infty$.  
Therefore, in view of Lemmas \ref{simple2} and \ref{sincos}, we have, as $k \rightarrow \infty$,
\[
\mu(\alpha) \approx \mu [ 1-\sin(\alpha)+\sigma (1-\cos(\alpha))] \le \mu [ 1-\sin(\alpha)+\sigma \sin^2(\alpha)]
< \mu
\]
provided that $\dot{\lambda}$, $\dot{x}$, $\ddot{\lambda}$, and $\ddot{x}$ are bounded. This means that equation
$\mu(\alpha) < \mu$ for any $\alpha \in (0, \frac{\pi}{2})$ as $k \rightarrow \infty$. As a matter of fact,
in all numerical test, we have observed the decrease of the duality measure in every iteration
even for ${\alpha}_x \ne \alpha_s$.
\end{remark}

Positivity of $x(\sigma, \alpha_x)$ and $s(\sigma,\alpha_s)$ is guaranteed if $(x,s)>0$ holds and
$\alpha_x$ and $\alpha_s$ are small enough. Assuming that $\dot{x}$, $\dot{s}$, $\ddot{x}$, and $\ddot{s}$
are bounded, the claim can easily be seen from the following relations
\begin{eqnarray}
x(\sigma, \alpha_x) = x-\dot{x} \sin(\alpha_x) +\ddot{x}(1-\cos(\alpha_x)) > 0,
\label{posi1}
\end{eqnarray}
\begin{eqnarray}
s(\sigma, \alpha_s) = s-\dot{s} \sin(\alpha_s) +\ddot{x}(1-\cos(\alpha_s)) > 0.
\label{posi2}
\end{eqnarray}

\section{Implementation details}

In this section, we discuss factors that are normally not discussed in the main body of algorithms but affect 
noticeably, if not significantly, the effectiveness and efficiency of the infeasible interior-point algorithms. 
Most of these factors have been discussed in wide spread literatures, and they are likely implemented differently 
from code to code. We will address all of these implementation topics and provide detailed information of our 
implementation. As we will compare arc-search method and Mehrotra's method, to make a meaningful 
and fair comparison, we will implement everything discussed in this section the same way for both methods, so that the 
only differences of the two algorithms in our implementations are in Steps $4$ and $5$, 
where the arc-search method uses formulae (\ref{update1}) and (\ref{update2}) and Mehrotra's method uses 
(\ref{update1a}) and (\ref{update2a}). But the difference of the computational cost is very small 
because these computations are all analytic.

\subsection{Initial point selection}

Initial point selection has been known an important factor in the computational efficiency for most
infeasible interior-point algorithms. However, many commercial software packages do not 
provide sufficient details, for example, \cite{cmww97, zhang96}. We will use the methods proposed in 
\cite{Mehrotra92,lms92}. We compare the duality measures obtained by these two methods and
select the initial point with smaller duality measure as we guess this selection will reduce 
the number of iterations.

\subsection{Pre-process}

Pre-process or pre-solver is a major factor that can significantly affect the numerical
stability and computational efficiency. Many literatures have been focused on this topic,
for example, \cite{wright97,lms92,bmw75,aa95,mahajan10}. As we will test all linear programming
problems in standard form in Netlib, we focus on the strategies only for the standard 
linear programming problems in the form of (\ref{LP}) and solved in normal equations\footnote{
Some strategies are specifically designed for solving augmented system form, for example, \cite{gosv12}}. 
We will use $A_{i,\cdot}$ for the ith row of A, $A_{\cdot,j}$ for the $j$th 
column of $A$, and $A_{i,j}$ for the element at $(i,j)$ position of $A$. While
reducing the problem, we will express the objective function into two parts,
$c^{\T} x = f_{obj} + \sum_{k} c_k x_k$. The first part $f_{obj}$ at the beginning 
is zero and is updated all the time as we reduce the problem (remove some $c_k$ from $c$); 
the terms in the summation in the
second part are continuously reduced and $c_k$ are updated as necessary when
we reduce the problem. 

The first $6$ pre-process methods presented below were reported in various literatures, such as
\cite{wright97,cmww97,bmw75,aa95,mahajan10}; 
the rest of them, to the best of our knowledge, are not reported anywhere.

\begin{itemize}
\item[] {\bf 1. Empty row} \newline
If $A_{i,\cdot}=0$ and $b_i=0$, this row can be removed. If $A_{i,\cdot}=0$ 
but $b_i \ne 0$, the problem is infeasible.
\item[] {\bf 2. Duplicate rows} \newline
If there is a constant $k$ such that $A_{i,\cdot}=k A_{j,\cdot}$ and $b_i=k b_j$, 
a duplicate row can be removed. If $A_{i,\cdot}=k A_{j,\cdot}$  
but $b_i \ne k b_j$, the problem is infeasible. 
\item[] {\bf 3. Empty column} \newline
If $A_{\cdot,i}=0$ and $c_{i} \ge 0$, $x_i=0$ is the right choice for the minimization,
the $i$th column $A_{\cdot,i}$ and $c_i$ can be removed. If $A_{\cdot,i}=0$ 
but $c_{i} < 0$, the problem is unbounded as $x_i \rightarrow \infty$.
\item[] {\bf 4. Duplicate columns} \newline
If $A_{\cdot, i}=A_{\cdot, j}$, then $A x= b$ can be expressed as
$A_{\cdot, i} (x_i+ x_j)+ \sum_{k \ne i,j} A_{\cdot,k} x_k = b$,
Moreover, if $c_i = c_j$,  $c^{\T} x$ can be expressed as 
$c_i (x_i + x_j)+ \sum_{k \ne i,j} c_k x_k$.
Since $x_i \ge 0$ and $x_j \ge 0$, we have $(x_i + x_j) \ge 0$. Hence, 
a duplicate column can be removed.
\item[] {\bf 5. Row singleton} \newline
If $A_{i,\cdot}$ has exact one nonzero element, i.e., $A_{i,k} \ne 0$ for
some $k$, and for $\forall j \ne k$, $A_{i,j}=0$; then 
$x_k=b_i / A_{i,k}$ and $c^{\T} x= c_k b_i / A_{i,k} + \sum_{j \ne k} c_j x_j$.
For $\ell \ne i$, $A_{\ell,\cdot} x=b_{\ell}$ can be rewritten as
$\sum_{j \ne k} A_{\ell, j} x_j = b_{\ell}-A_{\ell,k} b_i / A_{i,k}$.
This suggests the following update:
(i) if $x_k<0$, the problem is infeasible, otherwise, continue,
(ii) $f_{opt}+ c_k b_i / A_{i,k} \rightarrow f_{opt}$,
(iii) remove $c_k$ from $c$, and (iv) 
$b_{\ell}-A_{\ell,k} b_i / A_{i,k} \rightarrow b_{\ell}$.
With these changes, we can remove the $i$th row and the $k$th column.
\item[] {\bf 6. Free variable} \newline
If $A_{\cdot,i} =- A_{\cdot,j}$ and $c_i =- c_j$, then we can rewrite
$A x=b$ as $A_{\cdot,i} (x_i-x_j) + \sum_{k \ne i,j} A_{\cdot,k} x_k$,
and $c^{\T} x = c_i(x_i-x_j) + \sum_{k \ne i,j} c_k x_k$. The new variable
$x_i-x_j$ is a free variable which can be solved if $A_{\alpha,i} \ne 0$ for
some row $\alpha$ (otherwise, it is an empty column which has been 
discussed). This gives 
\[ 
x_i-x_j= \frac{1}{A_{\alpha,i}} \left(
b_\alpha - \sum_{k \ne i,j} A_{\alpha,k} x_k \right).
\]
For any $A_{\beta,i} \ne 0$, $\beta \ne \alpha$, $A_{\beta,\cdot} x=b_\beta$ can be expressed as
\[
A_{\beta,i}(x_i-x_j) + \sum_{k \ne i,j} A_{\beta, k} x_k = b_\beta,
\] 
or
\[
\frac{A_{\beta,i}}{A_{\alpha,i}} \left(
b_\alpha - \sum_{k \ne i,j} A_{\alpha,k} x_k \right) 
+ \sum_{k \ne i,j} A_{\beta, k} x_k = b_\beta,
\]
or
\[
\sum_{k \ne i,j} \left( A_{\beta, k}- 
\frac{A_{\beta,i}A_{\alpha,k}}{A_{\alpha,i}} \right) x_k 
= b_\beta-\frac{A_{\beta,i} b_{\alpha}}{A_{\alpha,i}}.
\]
Also, $c^{\T} x$ can be rewritten as
\[
\frac{c_i}{A_{\alpha,i}} \left(
b_\alpha - \sum_{k \ne i,j} A_{\alpha,k} x_k \right)
+ \sum_{k \ne i,j} c_k x_k,
\]
or
\[
\frac{c_i b_\alpha}{A_{\alpha,i}} + \sum_{k \ne i,j} \left( 
c_k - \frac{c_i A_{\alpha,k}}{A_{\alpha,i}}  \right) x_k.
\]
This suggests the following update: 
(i) $f_{obj}+\frac{c_i b_\alpha}{A_{\alpha,i}} \rightarrow f_{obj}$,
(ii) $c_k - \frac{c_i A_{\alpha,k}}{A_{\alpha,i}} \rightarrow c_k$, 
(iii) $A_{\beta, k}- \frac{A_{\beta,i}A_{\alpha,k}}{A_{\alpha,i}} \rightarrow A_{\beta, k}$,
(iv) $b_\beta-\frac{A_{\beta,i} b_{\alpha}}{A_{\alpha,i}} \rightarrow b_\beta$, 
(v) delete $A_{\alpha,\cdot}$, $b_\alpha$, $m-1  \rightarrow m$,
delete $A_{\cdot,i}$, $A_{\cdot,j}$, $c_i$, $c_j$, and $n-2 \rightarrow n$.
\item[] {\bf 7. Fixed variable defined by a single row} \newline
If $b_i<0$ and $A_{i,\cdot} \ge 0$ with at least one $j$ such that $A_{i,j}>0$, 
then, the problem is infeasible. Similarly, 
If $b_i>0$ and $A_{i,\cdot} \le 0$ with at least one $j$ such that $A_{i,j}<0$, 
then, the problem is infeasible. 
If $b_i=0$, but either $\max(A_{i,\cdot}) \le 0$ or $\min(A_{i,\cdot}) \ge 0$,
then for any $j$ such that $A_{i,j} \ne 0$, $x_j=0$ has to hold. Therefore, 
we can remove all such rows in $A$ and $b$, and such columns in $A$ and $c$.
\item[] {\bf 8. Fixed variable defined by multiple rows} \newline
If $b_i=b_j$, but either $\max(A_{i,\cdot}-A_{j,\cdot}) \le 0$ or
$\min(A_{i,\cdot}-A_{j,\cdot}) \ge 0$, then for any $k$ such that
$A_{i,k}-A_{j,k} \ne 0$, $x_k=0$ has to hold. This suggests the following update: 
(i) remove $k$th columns of $A$ and $c$ if $A_{i,k}-A_{j,k} \ne 0$, and 
(ii) remove either $i$th or $j$th row depending on which has more nonzeros. 
The same idea can be used for the case when $b_i+b_j=0$.
\item[] {\bf 9. Positive variable defined by signs of $A_{i,\cdot}$ and $b_i$} 
\newline
Since 
\[
x_i = \frac{1}{A_{\alpha,i}} \left(
b_\alpha - \sum_{k \ne i} A_{\alpha,k} x_k \right),
\]
if the sign of $A_{\alpha,i}$ is the same as $b_\alpha$ and opposite to all
$A_{\alpha,k}$ for $k \ne i$, then $x_i \ge 0$ is guaranteed. We can solve
$x_i$, and substitute back into $Ax=b$ and $c^{\T} x$. This suggests taking
the following actions: (i) if $A_{\beta,i} \ne 0$, 
$b_\beta - \frac{A_{\beta,i} b_\alpha}{A_{\alpha,i}} \rightarrow b_\beta$,
(ii) moreover, if $A_{\alpha,k}  \ne 0$, then
$A_{\beta,k} - \frac{A_{\beta,i}A_{\alpha,k}}{A_{\alpha,i}} \rightarrow A_{\beta,k}$,
(iii) $f_{obj}+\frac{c_i b_\alpha}{A_{\alpha,i}}  \rightarrow f_{obj}$,
(iv) $c_k-\frac{c_i A_{\alpha,k}}{A_{\alpha,i}}  \rightarrow c_k$, and
(v) remove the $\alpha$th row and $i$th column.
\item[] {\bf 10. A singleton variable defined by two rows}  \newline
If $A_{i,\cdot}-A_{j,\cdot}$ is a singleton and $A_{i,k}-A_{j,k} \ne 0$ for one and only one $k$,
then $x_k=\frac{b_i-b_j}{A_{i,k}-A_{j,k}}$. This suggests the following update: 
(i) if $x_k \ge 0$ does not hold, the problem is infeasible, (ii) if $x_k \ge 0$ 
does hold, for $\forall \ell \ne i,j$ and $A_{\ell,k} \ne 0$,
$b_\ell-A_{\ell,k}\frac{b_i-b_j}{A_{i,k}-A_{j,k}}  \rightarrow b_\ell$,
(iii) remove either the $i$th or the $j$th row, and remove the $k$th column
from $A$, (iv) remove $c_k$ from $c$, and (v) update 
$f_{obj}+c_k \frac{b_i-b_j}{A_{i,k}} \rightarrow f_{obj}$.
\end{itemize}

We have tested all these ten pre-solvers, and they all work in terms of reducing the problem sizes and
making the problems easier to solve in most cases. But pre-solvers $2,\,\,4,\,\,6,\,\,8$ and $10$ are observed to 
be significantly more time consuming than pre-solvers $1,\,\,3,\,\,5,\,\,7$ and $9$. Moreover, our experience shows
that pre-solvers $1,\,\,3,\,\,5,\,\,7$ and $9$ are more efficient in reducing the problem sizes than 
pre-solvers $2,\,\,4,\,\,6,\,\,8$ and $10$. Therefore, in our implementation, we use only pre-solvers 
$1,\,\,3,\,\,5,\,\,7$ and $9$ for all of our test problems.

\begin{remark}
Our extensive experience (by testing Netlib problems with various combinations of the pre-solves
and comparing results composed of the first five columns of Table 1 
in the next section and the corresponding columns of Table 1 in \cite{cmww97}) shows that 
the set of our pre-process methods uses less time and reduces the 
problem size more efficiently than the set of pre-process methods 
discussed and implemented in \cite{cmww97}.
\end{remark}

\subsection{Matrix scaling}

For ill-conditioned matrix $A$ where the ratio 
$\frac{\max{|A_{i,j}|}}{\min { \{ | A_{k,l} |  A_{k,l} \ne 0 \} } }$ is big, 
scaling is believed to be a good practice, 
for example, see \cite{cmww97}. PCx adopted a scaling strategy 
proposed in \cite{cr72}. 
Let $\Phi=\diag(\phi_1, \cdots, \phi_m)$ and 
$\Psi=\diag(\psi_1, \cdots, \psi_n)$ be the diagonal scaling
matrices of $A$. The scaling for matrix $A$ in \cite{cmww97,cr72} is equivalent to minimize
\[
\sum_{A_{ij} \ne 0} \log^2 \Big| \frac{ A_{ij} }{\phi_i \psi_j } \Big|.
\]
Different methods are proposed to solve this problem \cite{cmww97, cr72}. 
Our extensive experience with these methods and some variations
(by testing all standard problems in Netlib and comparing the results) makes us to believe that although
scaling can improve efficiency and numerical stability of infeasible interior-point algorithms 
for many problems, but over all, it does not help a lot. 
There are no clear criteria on 
what problems may benefit from scaling and what problems may be adversely
affected by scaling. Therefore, 
we decide not to use scaling in all our test problems.

\subsection{Removing row dependency from $A$}

Theoretically, convergence analyses in most existing literatures assume that the matrix $A$ 
is full rank. Practically, row dependency causes some
computational difficulties. However, many real world problems including some problems 
in Netlib have dependent rows. Though using standard Gaussian elimination method can reduce
$A$ into a full rank matrix, the sparse structure of $A$ will be destroyed.
In \cite{andersen95}, Andersen reported an efficient method that removes
row dependency of $A$. The paper also claimed that not only
the numerical stability is improved by the method, 
but the cost of the effort can also be justified. One of the main ideas is to 
identify most independent rows of $A$ in a cheap and easy way and separate these 
independent rows from those that may be dependent. A variation of Andersen's 
method can be summarized as follows.

First, it assumes that all empty rows have been removed by pre-solver.
Second, matrix $A$ often contains many column singletons (the column has only 
one nonzero), for example, slack variables are column singletons. Clearly, a row 
containing a column singleton cannot be dependent. If these rows
are separated (temporarily removed) from rest rows of $A$, new column 
singletons may appear and more rows may be separated. This process may separate 
most rows from rest rows of $A$ in practice. Permutation operations can be used 
to move the singletons to the diagonal elements of $A$. The dependent rows are 
among the rows left in the process. Then, Gaussian elimination method can be 
applied with pivot selection using Markowitz criterion \cite{duff89,dobes05}. 
Some implementation details include (a) break ties by choosing element with 
the largest magnitude, and (b) use threshold pivoting.

Our extensive experience makes us to believe that although Andersen's method may be worthwhile
for some problems and significantly improve the numerical stability, but it may be expensive
for many other problems. We choose to not use this function unless we feel it is necessary 
when it is used as part of handling degenerate solutions discussed later. To have a
fair comparison between two algorithms, we will make it clear in our test report what algorithms 
and/or problems use this function and what algorithms and/or problems do not use this function.

\subsection{Linear algebra for sparse Cholesky matrix}

Similar to Mehrotra's algorithm, the majority of the computational cost of our proposed algorithm is
to solve sparse Cholesky systems (\ref{doy2}) and (\ref{ddoy2}), which
can be expressed as an abstract problem as follows. 
\begin{equation}
AD^2A^{\T} u =v,
\label{useLater}
\end{equation}
where $D=X^{\frac{1}{2}}S^{-\frac{1}{2}}$ is identical in (\ref{doy2}) and (\ref{ddoy2}), 
but $u$ and $v$ are different vectors. Many popular LP solvers \cite{cmww97,zhang96} call a 
software package \cite{ np93} which uses some linear algebra specifically developed for 
the sparse Cholesky decomposition \cite{liu85}. However, MATLAB does not yet have
this function to call. This is the major difference of our implementation comparing to other
popular LP solvers, which is most likely the main reason that our test results are
slightly different from test results reported in other literatures.

\subsection{Handling degenerate solutions}

An important result in linear programming \cite{gt56} is that there always exist strictly 
complementary optimal solutions which meet the conditions $x^*  \circ s^*=0$ and $ x^*  +  s^*>0$.
Therefore, the columns of $A$ can be partitioned as $B \subseteq \{ 1, 2, \ldots, n\}$, 
the set of indices of the positive coordinates of $x^*$, and $N \subseteq \{ 1, 2, \ldots, n\}$, 
the set of indices of the positive coordinates of $s^*$, such that $B \cup N =\{ 1, 2, \ldots, n\}$ 
and $ B \cap N =\emptyset$. Thus, we can partition $A=(A_B, A_N)$, and define 
the primal and dual optimal faces by
\[
{\cal P}_*=\{ x: A_Bx_B=b, x \ge 0, x_N=0 \},
\]
and
\[
{\cal D}_*=\{ (\lambda, s): A^{\T}_N \lambda +S_N =c_N, s_B =0, s \ge 0 \}.
\]
However, not all optimal solutions in linear programming are strictly complementary. A simple
example is provided in \cite[Page 28]{wright97}. 
Although many interior-point algorithms are proved to converge strictly to complementary
solutions, this claim may not be true for Mehrotra's method and arc-search method proposed in this paper.

Recall that the problem pair (1) and (2) is called to have a primal degenerate solution
if a primal optimal solution $x^*$ has less than $m$ positive coordinates, and have a dual 
degenerate solution if a dual optimal solution $s^*$ has less than $n-m$ positive coordinates. 
The pair $(x^*,s^*)$ is called degenerate if it is primal or dual degenerate.
This means that as $x^k \rightarrow x^*$, equation (\ref{useLater}) can be written as 
\begin{equation}
(A_BX_BS_B^{-1}A^{\T}_B) u =v,
\label{useNow}
\end{equation}
If the problem converges to a primal degenerate solution, then the rank of $(A_BX_BS_B^{-1}A^{\T}_B)$ 
is less than $m$ as $x^k \rightarrow x^*$. In this case, there is a
difficulty to solve (\ref{useNow}).
Difficulty caused by degenerate solutions in interior-point methods for linear programming 
has been realized for a long time
\cite{ghrt93}. We have observed this troublesome incidence in quite a few Netlib test problems. 
Similar observation was also reported in \cite{gmstw86}.
Though we don't see any special attention or report on this troublesome issue from some widely
cited papers and LP solvers, such as \cite{Mehrotra92,lms91,lms92,cmww97,zhang96}, we noticed from
\cite[page 219]{wright97} that 
some LP solvers \cite{cmww97,zhang96} twisted the sparse Cholesky decomposition code
\cite{np93} to overcome the difficulty.

In our implementation, we use a different method to avoid the difficulty
because we do not have access to the code of \cite{np93}. After each iteration, minimum $x^k$ is examined. If
$\min \{ x^k \} \le \epsilon_x$, then, for all components of $x$ satisfying $x_i \le \epsilon_x$,
we delete $A_{.i}$, $x_i$, $s_i$, $c_i$, and the $i$th component
of $r_c$; use the 
method proposed in Subsection 4.4 to check if the updated $A$ is
full rank and make the updated $A$ full rank if it is necessary.

The default $\epsilon_x$ is $10^{-6}$. For problems that needs a different $\epsilon_x$,
we will make it clear in the report of the test results.

\subsection{Analytic solution of $\alpha^x$ and $\alpha^s$}

We know that $\alpha^x$ and $\alpha^s$ in (\ref{update1a}) can easily 
be calculated in analytic form. Similarly, $\alpha^x$ and $\alpha^s$ 
in (\ref{update1}) can also be calculated in analytic form as follows. 
For each $i \in \lbrace 1,\ldots, n \rbrace$, we can select the largest
$\alpha_{x_i}$ such that for any $\alpha \in [0, \alpha_{x_i}]$, the 
$i$th inequality of (\ref{updatex}) holds, and the largest $\alpha_{s_i}$ 
such that for any $\alpha \in [0, \alpha_{s_i}]$ the $i$th inequality 
of (\ref{updates}) holds. We then define 
\begin{eqnarray}
{\alpha^x}=\min_{i \in \lbrace 1,\ldots, n \rbrace}
\lbrace \alpha_{x_i}\rbrace, 
\\
{\alpha^s}=\min_{i \in \lbrace 1,\ldots, n \rbrace}
\lbrace \alpha_{s_i} \rbrace.
\label{alpha}
\end{eqnarray}
$\alpha_{x_i}$ and $\alpha_{s_i}$ can be given in analytical forms 
according to the values of $\dot{x}_i$, $\ddot{x}_i$, $\dot{s}_i$, 
$\ddot{s}_i$. First, from (\ref{update1}), we have
\begin{equation}
x_i +\ddot{x}_i
\ge \dot{x}_i\sin(\alpha)+\ddot{x}_i\cos(\alpha).
\label{alphai}
\end{equation}
Clearly, let $\beta=\sin(\alpha)$, this is equivalent to finding $\beta \in (0,1]$
such that 
\begin{equation}
x_i 
-\dot{x}_i\beta+\ddot{x}_i(1-\sqrt{1-\beta^2}) \ge 0.
\label{betai}
\end{equation}
But we prefer to use (\ref{alphai}) in the following analysis because of its geometric property.

\vspace{0.06in}
\noindent{\it Case 1 ($\dot{x}_i=0$ and $\ddot{x}_i\ne 0$)}:

For $\ddot{x}_i \ge -x_i$, and for any 
$\alpha \in [0, \frac{\pi}{2}]$, $x_i(\alpha) \ge 0$ holds.
For $\ddot{x}_i \le -x_i$, to meet (\ref{alphai}),
we must have 
$\cos(\alpha) \ge \frac{x_i +\ddot{x}_i}{\ddot{x}_i}$, or,
$\alpha \le \cos^{-1}\left( \frac{x_i +\ddot{x}_i}
{\ddot{x}_i} \right)$. Therefore,
\begin{equation}
\alpha_{x_i} = \left\{
\begin{array}{ll}
\frac{\pi}{2} & \quad \mbox{if $x_i +\ddot{x}_i \ge 0$} \\
\cos^{-1}\left( \frac{x_i +\ddot{x}_i}
{\ddot{x}_i} \right) & \quad 
\mbox{if $x_i +\ddot{x}_i \le 0$}.
\end{array}
\right.
\label{case1a}
\end{equation}
\noindent{\it Case 2 ($\ddot{x}_i=0$ and $\dot{x}_i\ne 0$)}:

For $\dot{x}_i \le x_i $, and for any 
$\alpha \in [0, \frac{\pi}{2}]$, $x_i(\alpha) \ge 0$ holds.
For $\dot{x}_i \ge x_i $,  to meet (\ref{alphai}),
we must have 
$\sin(\alpha) \le \frac{x_i }{\dot{x}_i}$, or
$\alpha \le \sin^{-1}\left( \frac{x_i }
{\dot{x}_i} \right)$. Therefore,
\begin{equation}
\alpha_{x_i} = \left\{
\begin{array}{ll}
\frac{\pi}{2} & \quad \mbox{if $\dot{x}_i \le x_i $} \\
\sin^{-1}\left( \frac{x_i }
{\dot{x}_i} \right) & \quad \mbox{if $\dot{x}_i \ge x_i $}
\end{array}
\right.
\label{case2a}
\end{equation}
\noindent{\it Case 3 ($\dot{x}_i>0$ and $\ddot{x}_i>0$)}:

Let $\dot{x}_i=\sqrt{\dot{x}_i^2+\ddot{x}_i^2}\cos(\beta)$, and
$\ddot{x}_i=\sqrt{\dot{x}_i^2+\ddot{x}_i^2}\sin(\beta)$, (\ref{alphai})
can be rewritten as 
\begin{equation}
x_i + \ddot{x}_i \ge \sqrt{\dot{x}_i^2+\ddot{x}_i^2}
\sin(\alpha + \beta),
\label{alphai1}
\end{equation}
where 
\begin{equation}
\beta = \sin^{-1} \left( \frac{\ddot{x}_i }
{\sqrt{\dot{x}_i^2+\ddot{x}_i^2}} \right).
\label{beta1}
\end{equation}
For $\ddot{x}_i + x_i \ge \sqrt{\dot{x}_i^2+\ddot{x}_i^2}$, and 
for any $\alpha \in [0, \frac{\pi}{2}]$, $x_i(\alpha) \ge 0$ holds.
For $\ddot{x}_i + x_i \le \sqrt{\dot{x}_i^2+\ddot{x}_i^2}$,  
to meet (\ref{alphai1}), we must have 
$\sin(\alpha + \beta) \le \frac{x_i + \ddot{x}_i}
{\sqrt{\dot{x}_i^2+\ddot{x}_i^2}}$, or
$\alpha + \beta \le \sin^{-1}\left( \frac{x_i + \ddot{x}_i }
{\sqrt{\dot{x}_i^2+\ddot{x}_i^2}} \right)$. Therefore,
\begin{equation}
\alpha_{x_i} = \left\{
\begin{array}{ll}
\frac{\pi}{2} & \quad \mbox{if $x_i + \ddot{x}_i \ge 
\sqrt{\dot{x}_i^2+\ddot{x}_i^2}$} \\
\sin^{-1}\left( \frac{x_i + \ddot{x}_i }
{\sqrt{\dot{x}_i^2+\ddot{x}_i^2}} \right) - \sin^{-1}\left( \frac{\ddot{x}_i } 
{\sqrt{\dot{x}_i^2+\ddot{x}_i^2}} \right) & \quad 
\mbox{if $x_i + \ddot{x}_i \le 
\sqrt{\dot{x}_i^2+\ddot{x}_i^2}$}
\end{array}
\right.
\label{case3a}
\end{equation}
\noindent{\it Case 4 ($\dot{x}_i>0$ and $\ddot{x}_i<0$)}:

Let $\dot{x}_i=\sqrt{\dot{x}_i^2+\ddot{x}_i^2}\cos(\beta)$, and
$\ddot{x}_i=-\sqrt{\dot{x}_i^2+\ddot{x}_i^2}\sin(\beta)$, (\ref{alphai})
can be rewritten as 
\begin{equation}
x_i + \ddot{x}_i \ge \sqrt{\dot{x}_i^2+\ddot{x}_i^2}
\sin(\alpha - \beta),
\label{alphai2}
\end{equation}
where 
\begin{equation}
\beta = \sin^{-1} \left( \frac{-\ddot{x}_i }
{\sqrt{\dot{x}_i^2+\ddot{x}_i^2}} \right).
\label{beta2}
\end{equation}
For $\ddot{x}_i + x_i \ge \sqrt{\dot{x}_i^2+\ddot{x}_i^2}$, and 
for any $\alpha \in [0, \frac{\pi}{2}]$, $x_i(\alpha) \ge 0$ holds.
For $\ddot{x}_i + x_i \le \sqrt{\dot{x}_i^2+\ddot{x}_i^2}$,  
to meet (\ref{alphai2}), we must have 
$\sin(\alpha - \beta) \le \frac{x_i + \ddot{x}_i}
{\sqrt{\dot{x}_i^2+\ddot{x}_i^2}}$, or
$\alpha - \beta \le \sin^{-1}\left( \frac{x_i + \ddot{x}_i }
{\sqrt{\dot{x}_i^2+\ddot{x}_i^2}} \right)$. Therefore,
\begin{equation}
\alpha_{x_i} = \left\{
\begin{array}{ll}
\frac{\pi}{2} & \quad \mbox{if $x_i + \ddot{x}_i \ge 
\sqrt{\dot{x}_i^2+\ddot{x}_i^2}$} \\
\sin^{-1}\left( \frac{x_i + \ddot{x}_i }
{\sqrt{\dot{x}_i^2+\ddot{x}_i^2}} \right) + \sin^{-1}\left( \frac{-\ddot{x}_i }
{\sqrt{\dot{x}_i^2+\ddot{x}_i^2}} \right) & \quad 
\mbox{if $x_i + \ddot{x}_i \le 
\sqrt{\dot{x}_i^2+\ddot{x}_i^2}$}
\end{array}
\right.
\label{case4a}
\end{equation}
\noindent{\it Case 5 ($\dot{x}_i<0$ and $\ddot{x}_i<0$)}:

Let $\dot{x}_i=-\sqrt{\dot{x}_i^2+\ddot{x}_i^2}\cos(\beta)$, and
$\ddot{x}_i=-\sqrt{\dot{x}_i^2+\ddot{x}_i^2}\sin(\beta)$, (\ref{alphai})
can be rewritten as 
\begin{equation}
x_i + \ddot{x}_i \ge -\sqrt{\dot{x}_i^2+\ddot{x}_i^2}
\sin(\alpha +\beta),
\label{alphai4}
\end{equation}
where 
\begin{equation}
\beta = \sin^{-1} \left( \frac{-\ddot{x}_i }
{\sqrt{\dot{x}_i^2+\ddot{x}_i^2}} \right).
\label{beta3}
\end{equation}
For $\ddot{x}_i + x_i\ge 0$ and 
any $\alpha \in [0, \frac{\pi}{2}]$, $x_i(\alpha) \ge 0$ holds.
For $\ddot{x}_i + x_i\le 0$,  
to meet (\ref{alphai4}), we must have 
$\sin(\alpha + \beta) \ge \frac{-(x_i + \ddot{x}_i)}
{\sqrt{\dot{x}_i^2+\ddot{x}_i^2}}$, or
$\alpha + \beta \le \pi - \sin^{-1} \left( \frac{-(x_i + 
\ddot{x}_i)}{\sqrt{\dot{x}_i^2+\ddot{x}_i^2}} \right)$. Therefore,
\begin{equation}
\alpha_{x_i} = \left\{
\begin{array}{ll}
\frac{\pi}{2} & \quad \mbox{if $x_i + \ddot{x}_i \ge 0$} \\
\pi - \sin^{-1} \left( \frac{-(x_i + \ddot{x}_i) }
{\sqrt{\dot{x}_i^2+\ddot{x}_i^2}} \right) - \sin^{-1}\left( \frac{-\ddot{x}_i }
{\sqrt{\dot{x}_i^2+\ddot{x}_i^2}} \right) 
& \quad \mbox{if $x_i + \ddot{x}_i \le 0$}
\end{array}
\right.
\label{case5a}
\end{equation}
\noindent{\it Case 6 ($\dot{x}_i<0$ and $\ddot{x}_i>0$)}:

Clearly (\ref{alphai}) always holds for $\alpha \in [0, \frac{\pi}{2}]$. 
Therefore, we can take \begin{equation}
\alpha_{x_i} = \frac{\pi}{2}.
\end{equation}
\noindent{\it Case 7 ($\dot{x}_i=0$ and $\ddot{x}_i=0$)}:

Clearly (\ref{alphai}) always holds for $\alpha \in [0, \frac{\pi}{2}]$. 
Therefore, we can take \begin{equation}
\alpha_{x_i} = \frac{\pi}{2}.
\end{equation}
Similar analysis can be performed for $\alpha^s$ in (\ref{update1}) and similar results 
can be obtained for $\alpha_{s_i}$. For completeness, we list the formulae
without repeating the proofs.

\noindent{\it Case 1a ($\dot{s}_i=0$, $\ddot{s}_i \ne 0$)}:

\begin{equation}
\alpha_{s_i} = \left\{
\begin{array}{ll}
\frac{\pi}{2} & \quad \mbox{if $s_i +\ddot{s}_i \ge 0$} \\
\cos^{-1}\left( \frac{s_i +\ddot{s}_i}
{\ddot{s}_i} \right) & \quad 
\mbox{if $s_i +\ddot{s}_i \le 0$}.
\end{array}
\right.
\label{case1b}
\end{equation}
\noindent{\it Case 2a ($\ddot{s}_i=0$ and $\dot{s}_i \ne 0$)}:

\begin{equation}
\alpha_{s_i} = \left\{
\begin{array}{ll}
\frac{\pi}{2} & \quad \mbox{if $\dot{s}_i \le s_i $} \\
\sin^{-1}\left( \frac{s_i }
{\dot{s}_i} \right) & \quad \mbox{if $\dot{s}_i \ge s_i $}
\end{array}
\right.
\label{case2b}
\end{equation}
\noindent{\it Case 3a ($\dot{s}_i>0$ and $\ddot{s}_i>0$)}:

\begin{equation}
\alpha_{s_i} = \left\{
\begin{array}{ll}
\frac{\pi}{2} & \quad \mbox{if $s_i + \ddot{s}_i \ge 
\sqrt{\dot{s}_i^2+\ddot{s}_i^2}$} \\
\sin^{-1}\left( \frac{s_i + \ddot{s}_i }
{\sqrt{\dot{s}_i^2+\ddot{s}_i^2}} \right) - \sin^{-1}\left( \frac{\ddot{s}_i } 
{\sqrt{\dot{s}_i^2+\ddot{s}_i^2}} \right) & \quad 
\mbox{if $s_i + \ddot{s}_i < 
\sqrt{\dot{s}_i^2+\ddot{s}_i^2}$}
\end{array}
\right.
\label{case3b}
\end{equation}
\noindent{\it Case 4a ($\dot{s}_i>0$ and $\ddot{s}_i<0$)}:

\begin{equation}
\alpha_{s_i} = \left\{
\begin{array}{ll}
\frac{\pi}{2} & \quad \mbox{if $s_i + \ddot{s}_i \ge 
\sqrt{\dot{s}_i^2+\ddot{s}_i^2}$} \\
\sin^{-1}\left( \frac{s_i + \ddot{s}_i }
{\sqrt{\dot{s}_i^2+\ddot{s}_i^2}} \right) + \sin^{-1}\left( \frac{-\ddot{s}_i }
{\sqrt{\dot{s}_i^2+\ddot{s}_i^2}} \right) & \quad 
\mbox{if $s_i + \ddot{s}_i \le 
\sqrt{\dot{s}_i^2+\ddot{s}_i^2}$}
\end{array}
\right.
\label{case4b}
\end{equation}
\noindent{\it Case 5a ($\dot{s}_i<0$ and $\ddot{s}_i<0$)}:

\begin{equation}
\alpha_{s_i} = \left\{
\begin{array}{ll}
\frac{\pi}{2} & \quad \mbox{if $s_i + \ddot{s}_i \ge 0$} \\
\pi - \sin^{-1} \left( \frac{-(s_i + \ddot{s}_i) }
{\sqrt{\dot{s}_i^2+\ddot{s}_i^2}} \right) - \sin^{-1}\left( \frac{-\ddot{s}_i }
{\sqrt{\dot{s}_i^2+\ddot{s}_i^2}} \right) 
& \quad \mbox{if $s_i + \ddot{s}_i \le 0$}
\end{array}
\right.
\label{case5b}
\end{equation}
\noindent{\it Case 6a ($\dot{s}_i<0$ and $\ddot{s}_i>0$)}:

Clearly (\ref{alphai}) always holds for $\alpha \in [0, \frac{\pi}{2}]$.
Therefore, we can take
\begin{equation}
\alpha_{s_i} = \frac{\pi}{2}.
\end{equation}
\noindent{\it Case 7a ($\dot{s}_i=0$ and $\ddot{s}_i=0$)}:

Clearly (\ref{alphai}) always holds for $\alpha \in [0, \frac{\pi}{2}]$. 
Therefore, we can take \begin{equation}
\alpha_{s_i} = \frac{\pi}{2}.
\end{equation}

\subsection{Step scaling parameter}

A fixed step scaling parameter is used in PCx \cite{cmww97}. A more sophisticated step scaling 
parameter is used in LIPSOL according to \cite[Pages 204-205]{wright97}. In our implementation, 
we use an adaptive step scaling parameter which is given below
\begin{equation}
\beta = 1-e^{-(k+2)},
\label{beta}
\end{equation}
where $k$ is the number of iterations. This parameter will approach to one as $k \rightarrow \infty$.

\subsection{Terminate criteria}

The main stopping criterion used in our implementations of arc-search method and
Mehrotra's method is similar to that of LIPSOL \cite{zhang96}
\[
\frac{\|r_b^k\|}{\max \lbrace 1, \| b\| \rbrace }
+\frac{\|r_c^k \|}{\max \lbrace 1, \| c\|  \rbrace }
+\frac{ \mu_k }{\max \lbrace 1, \| c^{\T}x^k \|, \|b^{\T}\lambda^k \|  \rbrace } 
< 10^{-8}.
\] 

In case that the algorithms fail to find a good search direction,
the programs also stop if step sizes $\alpha_k^x < 10^{-8}$ and
$\alpha_k^s < 10^{-8}$. 

Finally, if due to the numerical problem, $r_b^{k}$ or $r_c^{k}$ does not decrase but 
$10r_b^{k-1}<r_b^{k}$ or $10r_c^{k-1}<r_c^{k}$, the programs stop.

\section{Numerical Tests}

In this section, we first examine a simple problem and show graphically what feasible 
central path and infeasible central path look like, why ellipsoidal approximation may be a 
better approximation to infeasible central path than a straight line, and how 
arc-search is carried out for this simple problem. Using a plot, 
we can easily see that searching along the ellipse is more attractive than searching along a 
straight line. We then provide the numerical test results of larger scale Netlib test problems to 
validate our observation from this simple problem.

\subsection{A simple illustrative example}

Let us consider 
\begin{example}
\[
\min x_1, \hspace{0.15in} s.t. \hspace{0.1in} x_1+x_2 = 5,
\hspace{0.1in} x_1 \ge 0, \hspace{0.1in} x_2 \ge 0. 
\]
\end{example}
The feasible central path $(x,s)$ defined in (\ref{centralpath}) satisfies the following conditions:
\[
x_1+x_2 = 5,
\]
\[
\left[ \begin{array}{c} 1 \\ 1 \end{array} \right] \lambda + 
\left[ \begin{array}{c} s_1 \\ s_2 \end{array} \right] =
\left[ \begin{array}{c} 1 \\ 0 \end{array} \right],
\]
\[
x_1s_1 =\mu, \hspace{.15in} x_2s_2 =\mu.
\]
The optimizer is given by $x_1=0$, $x_2=5$, $\lambda=0$, $s_1=1$, and $s_2=0$.
The feasible central path of this problem is given analytically as
\begin{subequations}
\begin{gather}
\lambda=\frac{5-2\mu-\sqrt{(5-2\mu)^2+20\mu}}{10},
\\
s_1 = 1-\lambda, \hspace{0.1in} s_2 = -\lambda, \hspace{0.1in} 
x_1s_1 =\mu, \hspace{.1in} x_2s_2 =\mu.
\end{gather}
\label{simpleEx}
\end{subequations}

\begin{figure}[ht]
\centerline{\epsfig{file=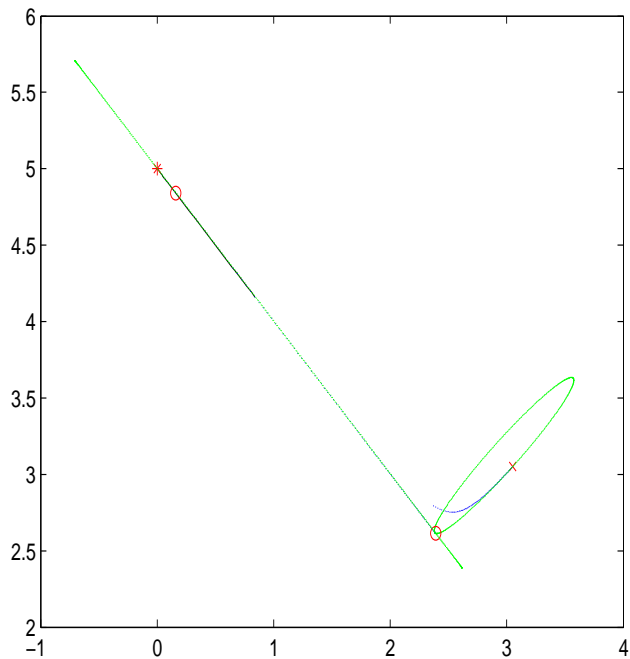,height=10cm,width=10cm}}
\caption{Arc-search for the simple example.}
\label{fig:iter1}
\end{figure}

The feasible and infeasible central paths are arcs in $5$-dimensional space 
$(\lambda, x_1, s_1, x_2, s_2)$. 
If we project the central paths into $2$-dimensional subspace spanned by 
$(x_1,x_2)$, they are arcs in $2$-dimensional subspace. Figure \ref{fig:iter1} shows 
the first two iterations of Algorithm \ref{mainAlgo3} in the $2$-dimensional
subspace spanned by $(x_1,x_2)$. In Figure \ref{fig:iter1}, 
the initial point $(x_1^0, x_2^0)$ is marked by 'x' in red;
the optimal solution is marked by '*' in red;
$(\dot{x}, \dot{s},
\dot{\lambda})$ is calculated by using (\ref{doty}); 
$(\ddot{x}, \ddot{s}, \ddot{\lambda})$ is calculated by using 
(\ref{ddoty}); the projected feasible central path ${\cal C}(t)$ near the optimal solution is 
calculated by using (\ref{simpleEx}) and
is plotted as a continuous line 
in black; the infeasible central path ${\cal H}(t)$ starting from current iterate is calculated by using 
(\ref{neiborArc}) and plotted as the dotted lines in blue; and
the projected ellipsoidal approximations ${\cal E}(\alpha)$ are the dotted 
lines in green (they may look like continuous line 
some times because many dots are used).
In the first iteration, the iterate 'x' moves along the ellipse (defined by in Theorem \ref{ellipseSX})
to reach the next iterate marked as 'o' in red because the calculation of infeasible central path 
(the blue line) is very expensive and ellipse is cheap to calculate and a better approximation to 
the infeasible central path than a straight line. The rest iterations are simply the repetition of 
the process until it reaches the optimal solution $(s^*, x^*)$. Only two iterations 
are plotted in Figure \ref{fig:iter1}.

It is worthwhile to note that in this simple problem, the infeasible central path has a sharp turn
in the first iteration which may happen a number of times for general problem as discussed
in \cite{vy96}. The arc-search method is expected to perform better than
Mehrotra's method in iterations that are 
close to the sharp turns. In this simple problem, after the first iteration, the feasible central
path ${\cal C}(t)$, the infeasible central path ${\cal H}(t)$, and the ellipse ${\cal E}(\alpha)$ 
are all very close to each other and close to a straight line.

\subsection{Netlib test examples}

The algorithm developed in this paper is implemented in a Matlab
function.
Mehrotra's algorithm is also implemented in a Matlab function.
They are almost identical. Both algorithms use exactly the same initial point, the same
stopping criteria, the same pre-process, and the same parameters.
The only difference of the two implementations is that arc-search method
searches optimizer along an ellipse and Mehrotra's method searches
optimizer along a straight line.
Numerical tests for both algorithms have been performed for 
all Netlib LP problems that are presented in standard form. 
The iteration numbers used to solve these problems are listed in  
Table 1. Only one Netlib problem {\tt Osa\_60} ($m=10281$ and $n=232966$) presented in standard form is 
not included in the test because the PC computer used for the testing
does not have enough memory to handle this problem.

\footnotesize
\begin{longtable}{|c|c|c|c|c|c|r|c|c|r|c|}
\hline          
\multirow{2}{*}{Problem} & \multicolumn{2}{c|}{before prep} & \multicolumn{2}{c|}{after prep} & \multicolumn{3}{|c|}{Arc-search}  & \multicolumn{3}{|c|}{Mehrotra} \\ \cline{2-11}
           & m & n & m & n & iter  & objective & infeas & iter  & objective & infeas \\ 
\hline
Adlittle     & 56   & 138   & 54   & 136   & 15  &   2.2549e+05   & 1.0e-07 & 15 & 2.2549e+05  & 3.4e-08 \\ \hline
Afiro        & 27   & 51    & 8    & 32    & 9   &   -464.7531    & 1.0e-11 &  9 & -464.7531   & 8.0e-12 \\ \hline
Agg          & 488  & 615   & 391  & 479   & 18  &   -3.5992e+07  & 5.0e-06 & 22 & -3.5992e+07 & 5.2e-05 \\ \hline
Agg2         & 516  & 758   & 514  & 755   & 18  &   -2.0239e+07  & 4.6e-07 & 20 & -2.0239e+07 & 5.2e-07 \\ \hline
Agg3         & 516  & 758   & 514  & 755   & 17  &   1.0312e+07   & 3.1e-08 & 18 & 1.0312e+07  & 8.8e-09 \\ \hline
Bandm        & 305  & 472   & 192  & 347   & 19  &   -158.6280    & 3.2e-11 & 22 & -158.6280   & 8.3e-10 \\ \hline
Beaconfd     & 173  & 295   & 57   & 147   & 10  &   3.3592e+04   & 1.4e-12 & 11 & 3.3592e+04  & 1.4e-10 \\ \hline
Blend        & 74   & 114   & 49   & 89    & 12  &   -30.8121     & 1.0e-09 & 14 & -30.8122    & 4.9e-11 \\ \hline
Bnl1         & 643  & 1586  & 429  & 1314  & 32  &   1.9776e+03   & 2.7e-09 & 35 & 1.9776e+03  & 3.4e-09 \\ \hline
Bnl2         & 2324 & 4486  & 1007 & 3066  & 32  &   1.8112e+03   & 5.4e-10 & 37 & 1.8112e+03  & 9.3e-07 \\ \hline
Brandy       & 220  & 303   & 113  & 218   & 20  &   1.5185e+03   & 3.0e-06 & 19 & 1.5185e+03  & 6.2e-08 \\ \hline
Degen2*      & 444  & 757   & 440  & 753   & 16  &   -1.4352e+03  & 1.9e-08 & 17 & -1.4352e+03 & 2.0e-10 \\ \hline
Degen3*      & 1503 & 2604  & 1490 & 2591  & 22  &   -9.8729e+02  & 7.0e-05  & 22 & -9.8729e+02 & 1.2e-09 \\ \hline
fffff800     & 525  & 1208  & 487  & 991   & 26  &   5.5568e+005  & 4.3e-05 & 31 & 5.5568e+05  & 7.7e-04 \\ \hline
Israel       & 174  & 316   & 174  & 316   & 23  &   -8.9664e+05  & 7.4e-08 & 29 & -8.9665e+05 & 1.8e-08 \\ \hline
Lotfi        & 153  & 366   & 113  & 326   & 14  &   -25.2647     & 3.5e-10 & 18 & -25.2647    & 2.7e-07 \\ \hline
Maros\_r7    & 3136 & 9408  & 2152 & 7440  & 18  &   1.4972e+06   & 1.6e-08 & 21 & 1.4972e+06  & 6.4e-09 \\ \hline
Osa\_07*     & 1118 & 25067 & 1081 & 25030 & 37  &   5.3574e+05   & 4.2e-07 & 35 & 5.3578e+05  & 1.5e-07 \\ \hline
Osa\_14      & 2337 & 54797 & 2300 & 54760 & 35  &  1.1065e+06    & 2.0e-09 & 37 & 1.1065e+06  & 3.0e-08 \\ \hline
Osa\_30      & 4350 &104374 & 4313 & 104337& 32  &  2.1421e+06    & 1.0e-08 & 36 &  2.1421e+06 & 1.3e-08 \\ \hline
Qap12        & 3192 & 8856  & 3048 & 8712  & 22  &  5.2289e+02    & 1.9e-08 & 24 & 5.2289e+02  & 6.2e-09  \\ \hline
Qap15*       & 6330 & 22275 & 6105 & 22050 & 27  &  1.0411e+03    & 3.9e-07 & 44 & 1.0410e+03  & 1.5e-05  \\ \hline
Qap8*        & 912  & 1632  & 848  & 1568  & 12  &   2.0350e+02   & 1.2e-12 & 13 & 2.0350e+02  & 7.1e-09  \\ \hline
Sc105        & 105  & 163   & 44   & 102   & 10  &   -52.2021     & 3.8e-12 & 11 & -52.2021    & 9.8e-11 \\ \hline
Sc205        & 205  & 317   & 89   & 201   & 13  &   -52.2021     & 3.7e-10 & 12 & -52.2021    & 8.8e-11 \\ \hline
Sc50a        & 50   & 78    & 19   & 47    & 10  &   -64.5751     & 3.4e-12 & 9  & -64.5751    & 8.3e-08 \\ \hline
Sc50b        & 50   & 78    & 14   & 42    & 8   &   -70.0000     & 1.0e-10 & 8  & -70.0000    & 9.1e-07 \\ \hline
Scagr25      & 471  & 671   & 343  & 543   & 19  &  -1.4753e+07   & 5.0e-07 & 18 & -1.4753e+07 & 4.6e-09 \\ \hline
Scagr7       & 129  & 185   & 91   & 147   & 15  &   -2.3314e+06  & 2.7e-09 & 17 & -2.3314e+06 & 1.1e-07 \\ \hline
Scfxm1+      & 330  & 600   & 238  & 500   & 20  &   1.8417e+04   & 3.1e-07 & 21 &  1.8417e+04 & 1.6e-08 \\ \hline
Scfxm2       & 660  & 1200  & 479  & 1003  & 23  &   3.6660e+04   & 2.3e-06 & 26 &  3.6660e+04 & 2.6e-08 \\ \hline
Scfxm3+      & 990  & 1800  & 720  & 1506  & 24  &   5.4901e+04   & 1.9e-06 & 23 &  5.4901e+04 & 9.8e-08 \\ \hline
Scrs8        & 490  & 1275  & 115  & 893   & 23  &   9.0430e+02   & 1.2e-11 & 30 &  9.0430e+02 & 1.8e-10 \\ \hline
Scsd1        & 77   & 760   & 77   & 760   & 12  &   8.6666       & 1.0e-10 & 13 &     8.6666  & 8.7e-14 \\ \hline
Scsd6        & 147  & 1350  & 147  & 1350  & 14  &   50.5000      & 1.5e-13 & 16 &     50.5000 & 7.9e-13 \\ \hline
Scsd8        & 397  & 2750  & 397  & 2750  & 13  &   9.0500e+02   & 6.7e-10 & 14 &  9.0500e+02 & 1.3e-10 \\ \hline
Sctap1       & 300  & 660   & 284  & 644   & 20  &   1.4122e+03   & 2.6e-10 & 24 &  1.4123e+03 & 2.1e-09 \\ \hline
Sctap2       & 1090 & 2500  & 1033 & 2443  & 20  &   1.7248e+03   & 2.1e-10 & 21 &  1.7248e+03 & 4.4e-07 \\ \hline
Sctap3       & 1480 & 3340  & 1408 & 3268  & 20  &   1.4240e+03   & 5.7e-08 & 22 &  1.4240e+03 & 5.9e-07 \\ \hline
Share1b      & 117  & 253   & 102  & 238   & 22  &   -7.6589e+04  & 6.5e-08 & 25 & -7.6589e+04 & 1.5e-06 \\ \hline
Share2b      & 96   & 162   & 87   & 153   & 13  &   -4.1573e+02  & 4.9e-11 & 15 & -4.1573e+02 & 7.9e-10 \\ \hline
Ship04l      & 402  & 2166  & 292  & 1905  & 17  &   1.7933e+06   & 5.2e-11 & 18 &  1.7933e+06 & 2.9e-11 \\ \hline
Ship04s      & 402  & 1506  & 216  & 1281  & 17  &   1.7987e+06   & 2.2e-11 & 20 &  1.7987e+06 & 4.5e-09 \\ \hline
Ship08l**    & 778  & 4363  & 470  & 3121  & 18  &   1.9090e+06   & 1.6e-07 & 20 &  1.9091e+06 & 1.0e-10 \\ \hline
Ship08s+     & 778  & 2467  & 274  & 1600  & 17  &   1.9201e+06   & 3.7e-08 & 19 &  1.9201e+06 & 4.5e-12 \\ \hline
Ship12l*     & 1151 & 5533  & 610  & 4171  & 19  &   1.4702e+06   & 4.7e-13 & 20 &  1.4702e+06 & 1.0e-08 \\ \hline
Ship12s+     & 1151 & 2869  & 340  & 1943  & 17  &   1.4892e+06   & 1.0e-10 & 19 &  1.4892e+06 & 2.1e-13 \\ \hline
Stocfor1*    & 117  & 165   & 34   & 82    & 14  &   -4.1132e+04  & 2.8e-10 & 15 & -4.1132e+04 & 1.1e-10 \\ \hline
Stocfor2     & 2157 & 3045  & 766  & 1654  & 22  &   -3.9024e+04  & 2.1e-09 & 22 & -3.9024e+04 & 1.6e-09 \\ \hline
Stocfor3     & 16675& 23541 & 5974 & 12840 & 34  &  -3.9976e+04   & 4.7e-08 & 38 & -3.9976e+04 & 6.4e-08 \\ \hline
Truss        & 1000 & 8806  & 1000 & 8806  & 22  &  4.5882e+05    & 1.7e-07 & 36 &  4.5882e+05 & 9.5e-06 \\ \hline
\caption{Numerical results for test problems in Netlib}
\end{longtable}
\normalsize

Several problems have degenerate solutions which make them difficult to solve or need significantly more 
iterations. We choose to use the option described in Section 4.6 to solve these problems. 
For problems marked with '+', this option is called only for 
Mehrotra's method. For problems marked with '*', both algorithms need to call this option for better results. 
For the problem with '**', in addition to call this option, the default value of 
$10^{-6}$ has to be changed to $10^{-4}$ for Mehrotra's method.
We need to keep in mind that although using the option described in 
Section 4.6 reduces the iteration count significantly,
these iterations are significantly more expensive. 
Therefore, simply comparing 
iteration counts for problem(s) marked with '+' will lead to
a conclusion in favor of Mehrotra's method (which is what we will do in the following discussions).

Since the major cost in each iteration for both algorithms are solving linear systems of 
equations,
which are identical in these two algorithms, we conclude that iteration
numbers is a good measure of efficiency. In view of Table 1, it is 
clear that Algorithm \ref{mainAlgo3} uses less iterations
than Mehrotra's algorithm to find the optimal solutions for majority tested problems. Among $51$ 
tested problems, Mehrotra's method uses fewer iterations ($7$ iterations in total) than arc-search 
method for only $6$ problems ({\tt brandy}, {\tt osa\_07}, {\tt sc205}, {\tt sc50a},
{\tt scagr25}, {\tt scfxm3}\footnote{For this problem, Mehrotra's method needs to use the option
described in Section 4.6 but arc-search method does not need to. As a result, Mehrotra's method
uses noticeably more CPU time then arc-search method.}), while arc-search method uses fewer 
iterations ($126$ iterations in total)
than Mehrotra's method for $40$ problems. For the rest $5$ problems, both methods use 
the same number of iterations. Arc-search method is numerically more stable than 
Mehrotra's method because for problems {\tt scfxm1}, {\tt scfxm3}, {\tt ship08s}, {\tt ship12s}, 
arc-search method does not need to use the option described in Section 4.6 but Mehrotra's method need to
use the option to solve the problems. For problem {\tt ship08l}, Mehrotra's method need to adjust parameter 
in the option to find the optimizer but arc-search method does not need to adjust the parameter.

\section{Conclusions}

This paper proposes an arc-search interior-point path-following algorithm that 
searches optimizers along the ellipse that approximate infeasible central path. The proposed algorithm 
is different from Mehrotra's method
only in search path. Both arc-search method and Mehrotra's method
are implemented in Matlab so that the two methods use exactly same
initial point, the same pre-process, the same parameters, and the same 
stopping criteria. By doing this, we can compare both algorithms in 
a fair and controlled way.
Numerical test is conducted for Netlib problems for both methods. 
The results show that the proposed
arc-search method is more efficient and reliable than the well-known Mehrotra's method.

\section{Acknowledgments}
  
The author would like to thank Mr. Mike Case, the Director of the Division of Engineering in the Office of 
Research at US NRC, and Dr. Chris Hoxie, in the Office of Research at US NRC, for their providing computational 
environment for this research.


\end{document}